\newlength\shlength
\newcommand\xshlongvec[2][0]{\setlength\shlength{#1pt}%
  \stackengine{-5.6pt}{$#2$}{\smash{$\kern\shlength%
    \stackengine{7.55pt}{$\mathchar"017E$}%
      {\rule{\widthof{$#2$}}{.57pt}\kern.4pt}{O}{r}{F}{F}{L}\kern-\shlength$}}%
      {O}{c}{F}{T}{S}}
\theoremstyle{definition}
\newtheorem{theorem}{Theorem}[section]
\newtheorem{lemma}[theorem]{Lemma}
\newtheorem{proposition}[theorem]{Proposition}
\newtheorem{definition}[theorem]{Definition}
\newtheorem{corollary}[theorem]{Corollary}
\newtheorem{example}[theorem]{Example}
\newcommand{\N}{{\mathbb{N}}}
\title{Generalized cell maps }
\author{Carlos Islas}
\address{Universidad Aut\'onoma de la Ciudad de M\'exico}
\email{carlos.islas@uacm.edu.mx}
\author{Benjam\'in A.~Itz\'a-Ortiz} 
\address{\'Area Acad\'emica de Matem\'aticas y F\'isica, Universidad Aut\'onoma del Estado de Hidalgo, Pachuca, Hidalgo, Mexico}
\email{itza@uaeh.edu.mx}
\author{Roc\'io Leonel}
\address{Universidad Nacional Rosario Castellanos, Mexico City, Mexico}
\email{rocio.leonel@rcastellanos.cdmx.gob.mx}
\begin{document}

\begin{abstract}
In \cite{leonel}  the notion of a g-cell structure was introduced as a generalization of the construction proposed by Debski and Tymchatyn to realize a certain class of topological spaces as quotient spaces of inverse limits. 
In \cite{DT}, cell maps are defined between cell structures and showed
that a cell map between two complete cell structures induces a continuous
function between the spaces determined by the cell structures. In this paper, 
for g-cell structures, we introduce the notions of weak g-cell maps and g-cell maps. We give some conditions for weak g-cell and g-cell mappings between g-cell structures to induce continuous mappings between their corresponding spaces. 
\end{abstract}

\maketitle

\section*{Introduction}

In \cite{DT}  cell maps between cell structures were defined. They showed that cell maps
between cell structures determined continuous functions between the
topological spaces represented by those cell structures. In \cite{DT2018},
 the class of topologically complete spaces admitting a cell structure is extended and also showed
that topologically complete spaces and their continuous function are
determined by discrete approximations.
In \cite{leonel} the notion of cell structure was modified to include cellular graphs that are not necessarily discrete, resulting in a larger class of topological spaces including non-regular spaces.

In this work, we define weak g-cell maps, showing the existence of this functions for cell structures and, following the definition of cell map for cell structures given in \cite{DT2018}, we define g-cell maps between g-cell structures. We show that given a cell map between cell structures induces a weak g-cell map. And also that a g-cell map induces a weak g-cell map. Despite the similarity in the definitions of the functions involved, the continuity of the induced functions is no longer taken for granted, as g-cell structures are not necessarily discrete. However, we are able to provide some conditions to obtain the continuity of the induced functions.  

While the decomposition of a topological space in smaller discrete components is an interesting endeavor, it seemed to be restricted to complete metric spaces. With the notion of g-cell structures a larger class of topological spaces is allowed. However, the well behaviour of maps between g-cell structures was missing, which this paper aboards***.

\section{Preliminaries}
The set of natural numbers is as usual ${%
\mathbb{N}}=\{1,2,3,\ldots \}$ and $\Delta_X$ is the diagonal in $X\times X$, where $X$ is a topological space.

We will follow the terminology in \cite{leonel}. We say that an ordered pair $(G,r)$ is a cellular graph if $G$ is a
non-empty topological space and $r\subset G\times G$ is a reflexive and
symmetric relation on $G$. Suppose that $\{(G_{n},r_{n})\}_{n\in {\mathbb{N}}}$ is
a sequence of cellular graphs and let $\{g_{n}^{n+1}\}_{n\in {\mathbb{N}}}$
be a family of continuous functions $g_{n}^{n+1}\colon G_{n+1}\rightarrow
G_{n}$, called bonding maps, such that
$g_{n}^{n}$ is the identity in $G_{n}$, $g_{n}^{l}=g_{n}^{m}\circ g_{m}^{l}$ for $n<m<l$ and the bonding maps send edges to edges, that is, $ (g_{n}^{n+1}(x),g_{n}^{n+1}(y))\in r_{n}$ whenever $(x,y)\in r_{n+1}$. We will then say that $\left\{ \left( G_{n},r_{n}\right)
,g_{n}^{n+1}\right\} _{n\in {\mathbb{N}}}$ is an inverse sequence of
cellular graphs. 

If $\{(G_{n},r_{n})\}_{n\in {\mathbb{N}}}$ is a sequence of cellular graphs, the space $\bigcup_{n\in \N}G_n$ is endorsed with the disconnected sum topology, that is, $O\subset \bigcup_{n\in \N} G_n$ is open if $O\cap G_n$ is open in $G_n$ for all $n\in \N$. If $(G_n,r_n)$ and $(H_n,s_n)$ are sequences of cellular graphs, a map $f:~\underset{i\in 
%TCIMACRO{\U{2115} }%
%BeginExpansion
\mathbb{N}
%EndExpansion
}{\bigcup }G_{i}\rightarrow \underset{i\in 
%TCIMACRO{\U{2115} }%
%BeginExpansion
\mathbb{N}
%EndExpansion
}{\bigcup }H_{i}$ is said to be upper semi-continuous if  
the set $$\left\{ x\in {\bigcup_{n\in\N} }G_{n} :f\left( x\right) \subset W_i \right\} $$ is open. On the other hand, we say that $f$ is lower semi-continuous if the set 
\[
\left\{x\in {\bigcup_{n\in\N} }G_{n} \colon f(x)\cap W_i\neq\emptyset\right\}
\]
is open for every open $W_i\subset H_i$ \cite{Kuratowski32},\cite[P. 63]{E},\cite{Brown}. We say that $f$ is continuous if it is both upper and lower semi-continuous.

The inverse limit of an inverse sequence of cellular graphs 
$\left\{ \left( G_{n},r_{n}\right) ,g_{n}^{n+1}\right\} _{n\in {\mathbb{N}}}$%
, denoted by $G\sb\infty $ or $\displaystyle\lim_{\longleftarrow
}\{G_{n},g_{n}^{n+1}\}$, is the subspace $\left\{ \bar{x}=(x_{n})_{n\in {%
\mathbb{N}}}\colon \forall j\geq i,\,\,g_{i}^{j}(x_{j})=x_{i}\right\} $ of $%
\prod_{n\in {\mathbb{N}}}G_{n}$ endowed with the product topology. Elements
in $G\sb\infty $ are called threads and, by abuse of notation, the mapping $p_{i}\colon G\sb\infty
\rightarrow G_{i}$ denotes the restriction of the standard projection map $%
p_{i}\colon \prod_{n\in {\mathbb{N}}}G_{n}\rightarrow G_{i}$, for each $i\in 
%TCIMACRO{\U{2115} }%
%BeginExpansion
\mathbb{N}
%EndExpansion
$.

If $\left( G,r\right) $ is a cellular graph and $u\in G$, we define the
neighborhood of $u$, denoted by $B\left( u,r\right) $, the following set: $%
\left\{ v\in G:\left( u,v\right) \in r\right\} $, and for $A\subset G$, we
denoted $B\left( A,r\right) =\underset{a\in A}{\bigcup }~B\left( a,r\right) .$ A neighborhood of $u$ with radius $2r$ is defined as $$%
B(u,2r) =\left\{ a\in G:\text{ there is }b\in
B\left( u,r\right) \text{ such that }a\in B\left( b,r\right)
\right\}. $$
Similarly, a neighborhood of $u$ with radius $3r$ is defined as
$$%
B(u,3r) =\left\{ a\in G:\text{ there is }b\in
B\left( u,2r\right) \text{ such that }a\in B\left( b,r\right)
\right\}. $$

An inverse sequence of cellular graphs $\left\{ \left( G_{n},r_{n}\right)
,g_{n}^{n+1}\right\} _{n\in \N }$, with $G_n$ discrete, is a cell structure if it satisfies that for
each $\overline{x}\in G_{\infty }$ and each natural number $i$, there exists
an integer $j\geq i$ such that $g_{i}^{j}\left( B(x_j,2r_j) \right) \subset B\left( x_{i},r_{i}\right) $,  and for each $\overline{x}\in G_{\infty }$ and each natural
number $i$, there exists an integer $j\geq i$ such that $g_{i}^{j}\left(
B\left( x_{j},r_{j}\right) \right) $ is finite. 

Let $\left\{ \left( G_{n},r_{n}\right)
,g_{n}^{n+1}\right\} _{n\in \N }$ be 
 an inverse sequence of cellular graphs. Notice that the $G_n$ are not necessarily discrete. We define the natural relation $r$ on $G\sb\infty $ by $r=\{(\bar{x},%
\bar{y})\in G\sb\infty \times G\sb\infty \colon \,(x_{n},y_{n})\in r_{n}$, $%
\forall n\in \mathbb{N}\}$. We say that $\left\{
\left( G_{n},r_{n}\right) ,g_{n}^{n+1}\right\} _{n\in {\mathbb{N}}}$ is a
generalized cell structure, or a g-cell structure for short, if $r$ is an equivalent relation.

% \begin{enumerate}
%     \item For each thread $\bar{x}\in G\sb\infty$ and for each $i\in\N$ there exists $j\geq i$ such that  
%     $g_{i}^{j}\left(B\left(x_j,2r_j\right)\right)\subset B\left(x_i,r_i\right)$.
%     \item For each thread $\bar{x}\in G\sb\infty$ and for each $i\in\N$ there exists $j\geq i$ such that 
%     the set $g_{i}^{j}\left(B\left(x_j,r_j\right)\right)$ is countable.
% \end{enumerate}

Let $\left\{ \left( G_{n,}r_{n,}\right) ,g_{n}^{n+1}\right\} _{n\in 
%TCIMACRO{\U{2115} }%
%BeginExpansion
\mathbb{N}
%EndExpansion
}$ and $\left\{ \left( H_{n,}r_{n,}^{\prime }\right) ,h_{n}^{n+1}\right\}
_{n\in 
%TCIMACRO{\U{2115} }%
%BeginExpansion
\mathbb{N}
%EndExpansion
}$ be g-cell structures, with $G_{\infty }=\underset{\leftarrow }{~\lim }%
\left\{ \left( G_{n,}r_{n,}\right) ,g_{n}^{n+1}\right\} _{n\in 
%TCIMACRO{\U{2115} }%
%BeginExpansion
\mathbb{N}
%EndExpansion
}$, $H_{\infty }=\underset{\leftarrow }{~\lim }\left\{ \left(
H_{n,}r_{n,}^{\prime }\right) ,h_{n}^{n+1}\right\} _{n\in 
%TCIMACRO{\U{2115} }%
%BeginExpansion
\mathbb{N}
%EndExpansion
}$. Consider $r$ and $r^{\prime }$ the equivalence relation in $G_{\infty
} $ and $H_{\infty }$ respectively, and denoted by $G^{\ast }=G_{\infty }/r$
and $H^{\ast }=H_{\infty }/r^{\prime }.$ For $\overline{x}\in G\sb\infty$, we denote its corresponding class by $\pi(\overline{x})=[\overline{x}]\in G\sp\ast$ (respectively, $\pi\sp\prime\colon H\sb\infty\to H\sp\ast$ the quotient map, and  $\pi\sp\prime(\overline{y})=[\overline{y}]$). If $x_{i}\in G_{i}$ and   $U$ is a neighborhood
of $B\left( x_{i},r_{i}\right) $, we denote by $A_{x_{i},U}$ the following
set:
$$\left\{ \left[ \overline{z}\right] \in G^{\ast }:z_{i}\in U\text{ and there
exist }j>i\text{ such that }\left( z_{j},w_{j}\right) \notin r_{j},~\forall 
\overline{w}\in p_{i}^{-1}\left( B\left( U,r_{i}\right) \diagdown U\right)
\right\} .$$

Let $\left\{ \left( G_{n},r_{n}\right) ,g_{n}^{n+1}\right\} _{n\in \N }$ be a
cell structure. We say that two cells $a\in G_{m}$ and $b\in G_{n}$ are $close$ if $\left(
g_{k}^{m}\left( a\right) ,g_{k}^{n}\left( b\right) \right) \in r_{k}~$with $%
k=\min \left\{ n,m\right\}$.

A sequence $\left\{ u_{j}\right\} _{j\in 
%TCIMACRO{\U{2115} }%
%BeginExpansion
\mathbb{N}
%EndExpansion
}$ of cells in $\underset{i\in 
%TCIMACRO{\U{2115} }%
%BeginExpansion
\mathbb{N}
%EndExpansion
}{\bigcup }G_{i}$ is said to be a $Cauchy$ $sequence$ if and only if

\begin{enumerate} 
\item[$a)$] $\underset{j\rightarrow \infty }{\lim}\deg \left( u_{j}\right)
=\infty ,$

\item[$b)$] $u_{i}$ and $u_{j}$ are close for all $i$ and $j~$ sufficiently
large, that is, there is $N\in 
%TCIMACRO{\U{2115} }%
%BeginExpansion
\mathbb{N}
%EndExpansion
$ such that for $m,n>N$, $u_{m}$ and $u_{n}$ are close.
\end{enumerate}

The sequence $\left\{ u_{j}\right\} _{j\in 
%TCIMACRO{\U{2115} }%
%BeginExpansion
\mathbb{N}
%EndExpansion
}$ is said to be $converge~to~a~thread~\overline{x}\in G_{\infty }$ if $\underset{%
j\rightarrow \infty }{\lim}\deg \left( u_{j}\right) =\infty $ and $x_{i}$,$%
u_{j}$ are close for all $i$ and for all $j$ sufficiently large, that is,  there is $N$ such that if $i,j>N$ then $x_i$ is close to $u_j$.

If each Cauchy sequence of cells converges we say that $\left\{ \left( G_{n},r_{n}\right) ,g_{n}^{n+1}\right\} _{n\in \N }$ is a complete
cell structure.

\section{Weak g-cell maps}

In \cite{DT}, Debski and Tymchatyn defined cell maps between cell structures and show
that a cell map between two complete cell structures induces a continuous
function between the spaces determined by the cell structures. In this way, in this section, when $\left\{\left(G_n,r_n\right),g_{n}^{n+1}\right\}_{n\in\N}$ and $\left\{\left(H_n,s_n\right),h_{n}^{n+1}\right\}_{n\in\N}$ are g-cell structures, we define a function between their inverse limits that, under certain conditions, induces a natural continuous function between $G^*$ and $H^*$.

\begin{definition}
Let $\left\{\left(G_n,r_n\right),g_{n}^{n+1}\right\}_{n\in\N}$ and $\left\{\left(H_n,s_n\right),h_{n}^{n+1}\right\}_{n\in\N}$ be inverse sequences of cellular graphs.
A weak g-cell map is a function 
\[
f\colon G\sb\infty\to H\sb\infty
\]
which satisfies the following property: If $(\overline{x},\overline{y})\in r$, then $(f(\overline{x}),f(\overline{y}))\in s$.
%{\textcolor{red}{ $f(\overline{x})_{i_j}$ and $f(\overline{y})_{i_j}$ are close, for all $j\in \N$.}}
\end{definition}

Since a cell structure is a g-cell structure, in the following result we show that given a cell map between cell structures (see \cite{DT}), induces a weak g-cell map. The idea of the proof is contained in Lemma~7.2 of \cite{DT}, we include it here for completeness.

\begin{proposition}\label{inducedgcellmap}
Let $\left\{ \left( G_{n},r_{n}\right) ,g_{n}^{n+1}\right\} _{n\in 
%TCIMACRO{\U{2115} }%
%BeginExpansion
\mathbb{N}
%EndExpansion
}$ be a cell structure and let $\left\{ \left( H_{n},s_{n}\right) ,h_{n}^{n+1}\right\} _{n\in 
%TCIMACRO{\U{2115} }%
%BeginExpansion
\mathbb{N}
%EndExpansion
}$ be a complete cell structrures.
If $f\colon\underset{i\in 
%TCIMACRO{\U{2115} }%
%BeginExpansion
\mathbb{N}
%EndExpansion
}{\bigcup }G_{i}\rightarrow \underset{i\in 
%TCIMACRO{\U{2115} }%
%BeginExpansion
\mathbb{N}
%EndExpansion
}{\bigcup }H_{i} $ is a cell map in the sense of \cite{DT}, then it induces a weak g-cell map $\overline{f}\colon G\sb\infty\to H\sb\infty$ given by $\overline{f}(\overline{x})=\overline{y}$ where the sequence $\{f(x_n)\}_{n\in\N}$ converges to $\overline{y}$.
\end{proposition}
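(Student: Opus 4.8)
The plan is to first recall what a cell map $f\colon\bigcup_i G_i\to\bigcup_i H_i$ does: it sends cells to cells, is degree non-decreasing in an appropriate sense, and (this is the content of Lemma~7.2 of \cite{DT}) it maps Cauchy sequences to Cauchy sequences while respecting the closeness relation. Since $\{\left(H_n,s_n\right),h_n^{n+1}\}$ is a \emph{complete} cell structure, every Cauchy sequence in $\bigcup_i H_i$ converges to a thread, and the limit thread is unique up to the equivalence $s$ (two limits of the same Cauchy sequence are $s$-related). Thus the first step is to check that for each thread $\overline{x}\in G_\infty$, the sequence $\{x_n\}_{n\in\N}$ of its coordinates is a Cauchy sequence of cells in $\bigcup_i G_i$: condition $a)$ holds because $\deg(x_n)=n\to\infty$, and condition $b)$ holds because $g_k^m(x_m)=x_k=g_k^n(x_n)$ for $k=\min\{m,n\}$, so consecutive (indeed all) coordinates are close. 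Hence $\{f(x_n)\}_{n\in\N}$ is Cauchy in $\bigcup_i H_i$, and by completeness it converges to some thread $\overline{y}\in H_\infty$; set $\overline{f}(\overline{x})=\overline{y}$. One must note that $\overline{y}$ need not be literally unique, so to get a well-defined function one fixes a choice for each $\overline{x}$ (or, better, one observes that any two choices are $s$-related, which is all that is needed for the weak g-cell map condition; I would phrase it to make $\overline{f}$ a genuine function by a choice and then verify the defining property is choice-independent).

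The second and main step is to verify the weak g-cell map property: if $(\overline{x},\overline{x}')\in r$, i.e. $(x_n,x_n')\in r_n$ for all $n$, then $(\overline{f}(\overline{x}),\overline{f}(\overline{x}'))\in s$, i.e. $(y_n,y_n')\in s_n$ for all $n$, where $\overline{y}=\overline{f}(\overline{x})$ and $\overline{y}'=\overline{f}(\overline{x}')$. The idea is to interleave the two Cauchy sequences $\{x_n\}$ and $\{x_n'\}$ into a single sequence and show it is still Cauchy: since $x_n$ and $x_n'$ are close for every $n$ (they are $r_n$-related, and closeness of cells of the same degree is exactly being related), and the two sequences are individually Cauchy, the interleaved sequence $x_1,x_1',x_2,x_2',\dots$ satisfies both Cauchy conditions. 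A cell map sends this to a Cauchy sequence $f(x_1),f(x_1'),f(x_2),f(x_2'),\dots$ in $\bigcup_i H_i$, which by completeness converges; both $\overline{y}$ and $\overline{y}'$ are limits of this same Cauchy sequence (being limits of its two interleaved subsequences), and in a complete cell structure two limit threads of the same Cauchy sequence are $s$-related. This gives $(y_n,y_n')\in s_n$ for all $n$, as desired; the same interleaving argument with $\overline{x}=\overline{x}'$ also shows the choice of $\overline{y}$ is well-defined up to $s$, closing the gap from the first step.

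The step I expect to be the main obstacle is making the ``interleaving'' argument precise within the framework of \cite{DT}: one needs that a cell map sends a Cauchy sequence to a Cauchy sequence (this is essentially Lemma~7.2 of \cite{DT}, which the statement already credits), and one needs the fact that in a complete cell structure the limit of a Cauchy sequence is unique up to $s$ — this should follow from the definition of convergence together with transitivity of closeness for threads, but it requires a small lemma, namely that if $\{u_j\}$ converges to both $\overline{y}$ and $\overline{y}'$ then $y_i$ is close to $u_j$ and $y_i'$ is close to $u_j$ for all large $i,j$, from which $(y_i,y_i')\in s_i$ should be extracted using the cell-structure axioms (the $g_i^j(B(x_j,2r_j))\subset B(x_i,r_i)$ condition) to pass from ``close at high degree'' down to ``related at degree $i$''. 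I would isolate this as the technical core, state it cleanly, and then the Proposition follows by applying it twice: once to $\{f(x_n)\}$ versus the interleaved sequence to pin down $\overline{f}(\overline{x})$ up to $s$, and once to the interleaved image sequence to conclude $(\overline{f}(\overline{x}),\overline{f}(\overline{x}'))\in s$.
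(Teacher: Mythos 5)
Your proposal is correct and its skeleton matches the paper's: both define $\overline{f}(\overline{x})$ as a limit, guaranteed by completeness of $H$, of the Cauchy sequence $\{f(x_n)\}$, and both reduce the weak g-cell property to chaining closeness relations at high degree and contracting them down to level $i$ via the cell-structure axiom $h_i^j\left(B(y_j,2s_j)\right)\subset B(y_i,s_i)$. Where you differ is in the decomposition of the second step: the paper argues directly that for large $j$ one has $w_j$ close to $f(u_j)$, $f(u_j)$ close to $f(v_j)$ (since a cell map preserves closeness), and $f(v_j)$ close to $z_j$, so that $z_{j+N}\in B(w_{j+N},3s_{j+N})$ for large $N$ and hence $(w_i,z_i)\in s_i$; you instead interleave the two coordinate sequences into a single Cauchy sequence and invoke a uniqueness-up-to-$s$ lemma for limits of Cauchy sequences. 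The two routes are mathematically equivalent --- your uniqueness lemma, once unwound (including the auxiliary step that a limit of a subsequence of a Cauchy sequence is a limit of the whole sequence), produces essentially the same chain of closeness relations --- but your version is more modular and isolates a reusable fact, at the cost of having to state and prove two small lemmas that the paper's direct three-link chain avoids. You are also more careful than the paper about the well-definedness of $\overline{f}$ (the limit thread is only unique up to $s$, so a choice must be fixed), which is a genuine, if minor, point the paper glosses over.
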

\begin{proof}
Since $\overline{x}=(x_n)_{n\in\N}\in G\sb\infty$  is Cauchy, ${f}(\overline{x})$ is also Cauchy and since $H$ es complete, there is $\overline{y}\in H\sb\infty$ such that $(f(x_n))\to\overline{y}$. Define $\overline{f}(\overline{x})=\overline{y}$. Let $\overline{u},\overline{v}\in G\sb\infty$ such that $(\overline{u},\overline{v})\in r$.
Assume that $f(\overline{u})=\overline{w}$ and $f(\overline{v})=\overline{z}$. We want to show that $(\overline{w},\overline{z})\in s$, equivalently, that $(w_i,z_i)\in s_i$ for all $i\in \N$.

Let $i\in \N$.  
Using the convergence of sequences, there exists $i\leq j\in\N$ large enough such that $f(u_{j})$ and $w_{j}$ are close, and $f(v_{j})$ and $z_{j}$ are close. On the other hand, since $f$ takes close cells to close cells, it also follows that  $f(u_{j})$ and $f(v_{j})$ are close. Then, for sufficiently large $N$, we have that $z_{{j+N}}$ belongs to  $B(w_{{j+N}},3s_{j+N})$. By the definition of cell structure in \cite{DT}, 
$(w_{j+N},z_{{j+N}})\in s_{j+N}$. Thus $(w_{i},z_{i})\in s_{i}$, as wanted.  \end{proof}

\begin{theorem}\label{theoweakgcell}
Let $f\colon G\sb\infty\to H\sb\infty$ be a {weak} g-cell map. Then $f$ induces a function $\widehat f\colon G\sp\ast\to H\sp\ast$ given by $\widehat f([\overline{x}])=[f(\overline{x})].$ 
If in addition $f$ is continuous,   then $\widehat f$ is continuous.
\end{theorem}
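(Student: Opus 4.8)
The plan is to check two things: first that the assignment $[\overline{x}]\mapsto[f(\overline{x})]$ is independent of the chosen representative $\overline{x}$, and then that the induced map respects the quotient topologies.

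For well-definedness, suppose $[\overline{x}]=[\overline{y}]$ in $G\sp\ast$; by definition of the quotient this means $(\overline{x},\overline{y})\in r$. Since $f$ is a weak g-cell map, $(f(\overline{x}),f(\overline{y}))\in s$, so $f(\overline{x})$ and $f(\overline{y})$ determine the same class in $H\sp\ast$, i.e.\ $[f(\overline{x})]=[f(\overline{y})]$. Hence $\widehat f$ is a well-defined map $G\sp\ast\to H\sp\ast$, and by construction it is exactly the map making the diagram commute: $\widehat f\circ\pi=\pi\sp\prime\circ f$, where $\pi\colon G\sb\infty\to G\sp\ast$ and $\pi\sp\prime\colon H\sb\infty\to H\sp\ast$ are the quotient maps. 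Note that nothing about injectivity or surjectivity of $f$ is needed here.

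For the continuity statement I would appeal to the universal property of the quotient topology on $G\sp\ast$: a function out of $G\sp\ast$ is continuous if and only if its composition with the quotient map $\pi$ is continuous. Here $\widehat f\circ\pi=\pi\sp\prime\circ f$, and this composite is continuous because $f$ is continuous by hypothesis and $\pi\sp\prime$ is continuous (being a quotient map). Therefore $\widehat f$ is continuous. If one prefers a hands-on verification: given $V$ open in $H\sp\ast$, the set $(\pi\sp\prime)^{-1}(V)$ is open in $H\sb\infty$, so $f^{-1}\big((\pi\sp\prime)^{-1}(V)\big)$ is open in $G\sb\infty$; by commutativity of the square this set equals $\pi^{-1}\big(\widehat f^{-1}(V)\big)$, and the definition of the quotient topology then forces $\widehat f^{-1}(V)$ to be open in $G\sp\ast$.

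I do not expect a genuine obstacle in this proof: the first part is nothing but the defining property of a weak g-cell map transported across the quotient maps, and the second is the familiar fact that a continuous map compatible with two equivalence relations descends continuously to the quotients. The only points that need to be pinned down — and they are purely formal — are that $G\sp\ast$ and $H\sp\ast$ carry the quotient topologies (so that $\pi,\pi\sp\prime$ are quotient maps) and that ``continuous'' for $f$ refers to continuity with respect to the inverse-limit (subspace-of-product) topologies on $G\sb\infty$ and $H\sb\infty$. The real difficulty, addressed in the companion results, is exhibiting conditions under which a weak g-cell map $f$ is itself continuous, since the spaces involved need not be discrete.
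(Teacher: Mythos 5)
Your proposal is correct and follows essentially the same route as the paper: well-definedness comes directly from the defining property of a weak g-cell map, and continuity is obtained from the commutativity $\widehat f\circ\pi=\pi\sp\prime\circ f$ together with the universal property of the quotient topology, which is exactly the hands-on verification the paper carries out. No gaps.
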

\begin{proof}
If $(\overline{x},\overline{y})\in r$,  then $(f(\overline{x}),f(\overline{y}))\in s$. Thus \[ \widehat{f} ([\overline{x}])=[f(\overline{x})]=[f(\overline{y})]=\widehat{f}([\overline{y}]),\]as wanted.

If $f$ is continuous, since $\pi\sp\prime\circ f=\widehat{f}\circ\pi$, it follows that for an open set  $O\subset H\sp\ast$ we have that $$\pi^{-1}(\widehat{f}^{-1}(O))=(\widehat{f}\circ\pi)^{-1}(O)=(\pi\sp\prime\circ f)^{-1}(O)$$
which is open since $\pi\sp\prime\circ f$ is continuous. Hence $\widehat{f}^{-1}(O)$ is open, as wanted.
Thus $\widehat{f}$ is continous.
\end{proof}

By Lemma~7.2 in \cite{DT}, given a cell map $ f\colon\underset{i\in 
%TCIMACRO{\U{2115} }%
%BeginExpansion
\mathbb{N}
%EndExpansion
}{\bigcup }G_{i}\rightarrow \underset{i\in 
%TCIMACRO{\U{2115} }%
%BeginExpansion
\mathbb{N}
%EndExpansio    n
}{\bigcup }H_{i} $, it induces a function $f':G\sp\ast\to H\sp\ast$ given by $[\overline{x}]\mapsto [f(\overline{x})]$. By Proposition~\ref{inducedgcellmap}, $f$ induced a weak g-cell map $\overline{f}\colon G\sb\infty\to H\sb\infty$ and, by Theorem~\ref{theoweakgcell} $\overline f$ induces a function $\widehat{\overline f}\colon G\sp\ast\to H\sp\ast$ which coincides with $f'$.

\begin{example}\label{Ex:Fcont}

Let $\left\{ \left( G_{n},r_{n}\right) ,g_{n}^{n+1}\right\} _{n\in
%TCIMACRO{\U{2115} }%
%BeginExpansion
\mathbb{N}
%EndExpansion
}$ and $\left\{ \left( H_{n},s_{n}\right) ,h_{n}^{n+1}\right\} _{n\in
%TCIMACRO{\U{2115} }%
%BeginExpansion
\mathbb{N}
%EndExpansion
}$ be the following g-cell structures: $G_{n}=\left[ 0,1\right] $, $%
g_{n}^{n+1}=Id$ and $r_{n}=\Delta_{G_{n}}$; while $H_{n}=\left[ 0,1%
\right] \times \left\{ 0\right\} \cup \left\{ 1/2 \right\} \times  [ 0,1] $, 
 $h_{n}= Id$ and $s_{n}=\Delta_{H_{n}}\bigcup \left\{ \Bigl(\left( \frac{1}{2}, y \right), \left( \frac{1}{2}, 0 \right)\Bigr)
:y\in \left[ 0,1\right] \right\} \bigcup \left\{ \Bigl(\left( \frac{1}{2}, 0 \right) , \left( \frac{1}{2}, y \right)\Bigr)
:y\in \left[ 0,1\right] \right\}$. 
Then 
$G_{\infty}=\left[0,1\right]$, 
$H_{\infty}=H_1$ and 
$G^{\ast }=H^{\ast }= [0,1] $. Let $F\colon G\sp\ast\to H\sp\ast$ be the identity function. We may choose $f:G_{\infty}\to H_{\infty}$ as $f(x)=\left\{ \begin{array}{cc}
(x,0) & \text{if } x\neq \frac{1}{2}  \\
(1/2,1) & \text{if } x= \frac{1}{2} 
\end{array} 
\right.$.
Then $f$ is a weak g-cell map and $\widehat{f}=F$, $F$ is continuous but $f$ is not continuous.
\end{example}

In the following we give conditions for $f$ to be continuous when $F$ is continuous. It is analogous to Proposition 7.5 in \cite{DT}. 
\begin{theorem}\label{liftF}
Let $\left\{ \left( G_{n},r_{n}\right) ,g_{n}^{n+1}\right\} _{n\in 
%TCIMACRO{\U{2115} }%
%BeginExpansion
\mathbb{N}
%EndExpansion
}$  and   $\left\{ \left( H_{n},s_{n}\right) ,h_{n}^{n+1}\right\} _{n\in 
%TCIMACRO{\U{2115} }%
%BeginExpansion
\mathbb{N}
%EndExpansion
}$ be  g-cell structures. If $F\colon G\sp\ast\to H\sp\ast$ is a  function, then there exists a {weak} g-cell map $f\colon G\sb\infty\to H\sb\infty$ such that $F=\widehat{f}$, where $\widehat{f}$ is the induced map defined in Theorem~\ref{theoweakgcell}. Suppose that in addition $F$ is continuous, and one of the following two conditions is satisfied:
\begin{enumerate}
\item The quotient map
$\pi\sp\prime$ is open and 
$(\pi\sp\prime)^{-1}\left(\pi\sp\prime(V)\right)\subset V$ for every open $V\subset H\sp\ast.$
\item For each $G_{n}$ satisfies that given $x\in G_{n}$ and
for every open neighborhood $U$ of $B\left( x,r_{n}\right) $, there
exists an open neighborhood $O$ of $x$ such that $B\left( O,r_{n}\right)
\subset U$.
\end{enumerate}
Then $f$ is also continuous.
\end{theorem}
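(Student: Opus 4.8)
The plan is to produce the lift $f$ by a pure choice argument, and then to verify its continuity separately under hypothesis (1) and under hypothesis (2). First I would fix, using the axiom of choice, a section $\sigma\colon H^{*}\to H_{\infty}$ of the quotient map $\pi'$ (so $\pi'\circ\sigma=\mathrm{id}$), and set $f:=\sigma\circ F\circ\pi\colon G_{\infty}\to H_{\infty}$. Since $f$ factors through $\pi$ it is constant on each $r$-class, so if $(\overline{x},\overline{y})\in r$ then $f(\overline{x})=f(\overline{y})$, hence $(f(\overline{x}),f(\overline{y}))\in s$ because $s$ is reflexive; thus $f$ is a weak g-cell map. Moreover $\widehat{f}([\overline{x}])=[f(\overline{x})]=\pi'\big(\sigma(F([\overline{x}]))\big)=F([\overline{x}])$, so $\widehat{f}=F$. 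This settles the first assertion and reduces the rest to the continuity of $f$.

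Under hypothesis (1) I would show that \emph{every} set-theoretic section of $\pi'$ is continuous, so that $f$ is continuous as a composite of continuous maps. The hypothesis says $\pi'$ is open and that every open $W\subset H_{\infty}$ is $\pi'$-saturated, i.e. $(\pi')^{-1}(\pi'(W))=W$. For such $W$ one checks directly that $\sigma^{-1}(W)=\pi'(W)$: if $\sigma([\overline{y}])\in W$ then $[\overline{y}]=\pi'(\sigma([\overline{y}]))\in\pi'(W)$; conversely, if $[\overline{y}]=\pi'(w)$ with $w\in W$, then saturation of $W$ puts the whole class $[\overline{y}]\subset W$, so $\sigma([\overline{y}])\in W$. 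As $\pi'$ is open, $\sigma^{-1}(W)=\pi'(W)$ is open, hence $\sigma$, and therefore $f$, is continuous.

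Under hypothesis (2) the situation is more delicate: the section-composite above need not be continuous — Example~\ref{Ex:Fcont} is of exactly this type — so the lift must be built more carefully, and hypothesis (2) on the graphs $G_{n}$ is the tool that permits a continuous selection. I would argue pointwise. Fix $\overline{x}\in G_{\infty}$ and a basic neighbourhood $V=p_{i}^{-1}(W_{i})$ of $f(\overline{x})$, with $p_i\colon H_\infty\to H_i$ the projection and $W_{i}\subset H_{i}$ open (it suffices to treat such sets, since they form a neighbourhood basis of $H_\infty$); it is then enough to produce an open $O\ni\overline{x}$ in $G_{\infty}$ with $f(O)\subset V$. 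Continuity of $F$ yields an open neighbourhood of $[\overline{x}]$ in $G^{*}$ whose $\pi$-preimage is an open saturated neighbourhood $N$ of $\overline{x}$. Using that $N$ is saturated (hence contains the full $r$-class of each of its points) together with hypothesis (2) applied coordinate by coordinate — which lets one pass from an open neighbourhood of $B(x_{k},r_{k})$ to a smaller open neighbourhood of $x_{k}$ that remains controlled after forming $B(\,\cdot\,,r_{k})$ — I would shrink $N$ down to an open $O\ni\overline{x}$ along which a representative of $F([\,\cdot\,])$ can be chosen coordinate-compatibly inside $V$, and then verify that these local choices assemble into a single globally defined continuous $f$ with $\widehat{f}=F$.

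The weak g-cell property, the identity $\widehat{f}=F$, and the openness bookkeeping in case (1) are routine. The hard part will be case (2): reconciling the coordinate-by-coordinate use of hypothesis (2) with the inverse-limit (thread) constraint so that the locally chosen representatives glue into one globally defined continuous map. This is precisely where the interaction among the natural relation $r$, the bonding maps $g_{k}^{k+1}$, and hypothesis (2) has to be exploited with care.
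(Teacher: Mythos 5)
Your construction of the lift and your treatment of condition (1) are correct and essentially match the paper's argument: the paper also defines $f(\overline{x})$ by choosing a representative of $F([\overline{x}])$ (your $\sigma\circ F\circ\pi$ is a particular such choice), verifies $\widehat f=F$ and the weak g-cell property exactly as you do, and in case (1) shows $f(U)\subset V$ for $U=\pi^{-1}(F^{-1}(\pi'(V)))$, which is the pointwise version of your observation that any section of an open quotient map all of whose open sets are saturated is continuous.

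The genuine gap is case (2), and you have flagged it yourself: your last paragraph describes a plan (``shrink $N$ down to an open $O$ \dots\ verify that these local choices assemble into a single globally defined continuous $f$'') but supplies no argument for precisely the step you identify as the hard one, so as written you have proved only the first assertion and case (1). The missing ingredient is the device the paper actually uses: for a basic open set $V=p_{i}^{-1}(W)$ containing $\overline{y}=f(\overline{x})$, one considers the set $A_{y_{i},W}\subset H^{\ast}$ defined in the preliminaries. Lemma~1 of \cite{leonel}, whose hypothesis is condition (2) (read for the graphs $H_{n}$, which is what the proof needs), asserts that $A_{y_{i},W}$ is open in $H^{\ast}$ and contains $[\overline{y}]$; moreover the second clause in the definition of $A_{y_{i},W}$ forces \emph{every} representative $\overline{a}$ of a class in $A_{y_{i},W}$ to satisfy $a_{i}\in W$. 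Consequently $U=\pi^{-1}\left(F^{-1}\left(A_{y_{i},W}\right)\right)$ is an open neighbourhood of $\overline{x}$ with $f(U)\subset V$, and no re-selection of representatives or gluing of local choices is required. Your worry about Example~\ref{Ex:Fcont} is legitimate and points at a real subtlety (the set $W$ must be a neighbourhood of $B(y_{i},s_{i})$ for $A_{y_{i},W}$ to be available, so the statement has to be read as producing \emph{some} continuous lift rather than asserting continuity of an arbitrary one), but without proving or invoking an openness lemma of this type your case (2) remains an outline rather than a proof.
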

\begin{proof}
Let $\overline{x}\in G\sb\infty$. Then, since $F([\overline{x}])\in H\sp\ast$, there is $\overline{y}\in H\sb\infty$ such that $[\overline{y}]=F([\overline{x}])\in H\sp\ast$. Define $f(\overline{x})=\overline{y}$. Hence $\widehat{f}([\overline{x}])=[f(\overline{x})]=[\overline{y}]=F([\overline{x}])$. Now suppose that $(\overline{x},\overline{x\sp\prime})\in r$, then $[\overline{x}]=[\overline{x\sp\prime}]$. Assume that  $f\left(\overline{x\sp\prime}\right)=\overline{y\sp\prime}$, as $[\overline{y}]=F([\overline{x}])=F([\overline{x\sp\prime}])=[\overline{y\sp\prime}]$, then $(\overline{y},\overline{y\sp\prime})\in s$. Hence $f$ is a weak g-cell map.

Now, assume that $F$ is continuous. 

Suppose $\pi\sp\prime$ is open and $(\pi\sp\prime)^{-1}(\pi\sp\prime(V))\subset V$ for every open $V\subset H\sp\ast$. Let $\overline{x}\in G\sb\infty$, $\overline{y}=f(\overline{x})$ and $\overline{y}\in V\subset H\sb\infty$ open. Since $\pi\sp\prime$ is open, then $U=\pi^{-1}(F^{-1}(\pi\sp\prime(V)))\subset G\sb\infty$ is open and contains $\overline{x}$.  Since by hypothesis 
$(\pi\sp\prime)^{-1}(\pi\sp\prime(V))\subset V$, we get that $f(U)\subset V$. Thus, $f$ is a continuous function. 

On the other hand, suppose each $G_{n}$ satisfies that given $x\in G_{n}$ and
for every open neighborhood $U$ of $B\left( x,r_{n}\right) $, there
exists an open neighborhood $O$ of $x$ such that $B\left( O,r_{n}\right)
\subset U$. Let $\overline{x}\in G_{\infty }$, $\overline{y}=f\left( \overline{x}\right) 
$, $V$ a basic open set of $H_{\infty }$ that contains $\overline{y}$. Then $%
V=p_{i}^{-1}\left( W\right) $ for some $W$ open subset of $H_{i}.$ By Lemma~1 of \cite{leonel}, the set $A_{y_{i},{W}}$ is open in $H^{\ast }$ and this satisfies $\pi ^{\prime
}\left( \overline{y}\right) \in A_{y_{i},W}.$ Let $U=\pi ^{-1}\left( F^{-1}\left( A_{y_{i},W}\right) \right) $, then $U$
is an open subset of $G_{\infty }$.
Since $\pi 
%TCIMACRO{\U{b4}}%
%BeginExpansion
{'}%
%EndExpansion
\left( \overline{y}\right) =\left[ \overline{y}\right] =F\left( \left[ 
\overline{x}\right] \right)$, so $\pi(\overline{x})=[\overline{x}]\in F^{-1}\left(
A_{y_{i},W}\right) $, this implies that $\overline{x}\in U.$ Now, if $\overline{z}\in U$, let $f\left( \overline{z}\right) =\overline{a}$. This implies that $%
\left[ \overline{a}\right] =F\left( \left[ \overline{z}\right] \right) $ and $[\overline{z}]=\pi \left( \overline{z}\right) \in F^{-1}\left( A_{y_{i},W}\right) $; therefore $F\left( %
\left[ \overline{z}\right] \right) \in A_{y_{i},W}.$ Hence  $%
\left[ \overline{a}\right] \in A_{y_{i},W}$, then $a_{i}\in W$ and we get
that $f\left( U\right) \subset V$. Thus, $f$ is a continuous function.
\end{proof}

Of course, since condition (2) in the hypothesis of Theorem~\ref{liftF} is satisfied in Example~\ref{Ex:Fcont}, it is possible to construct a function $f$ which is continuous. This is achieved by choosing $f(x)=(x,0)$ for all $x\in \left[0,1\right]$.

The following example shows that even when $F$ is continous, it is not possible to choose an induced continuous function $f$.

\begin{example}

Let $\left\{ \left( G_{n},r_{n}\right) ,g_{n}^{n+1}\right\} _{n\in
%TCIMACRO{\U{2115} }%
%BeginExpansion
\mathbb{N}
%EndExpansion
}$ and $\left\{ \left( H_{n},s_{n}\right) ,h_{n}^{n+1}\right\} _{n\in
%TCIMACRO{\U{2115} }%
%BeginExpansion
\mathbb{N}
%EndExpansion
}$ be the following g-cell structures: $G_{n}=\left[ 0,1\right] $, $%
g_{n}^{n+1}=Id$, and $r_{n}=\triangle _{G_{n}}$; while $H_{n}=\left[ 0,1%
\right] $, 

$h_{n}^{n+1}\left( x\right) =\left\{
\begin{array}{cc}
4x & \text{if }x\in \left[ 0,\frac{1}{4}\right]  \\
\frac{3}{2}-2x & \text{if }x\in \left[ \frac{1}{4},\frac{1}{2}\right]  \\
x & \text{if }x\in \left[ \frac{1}{2},1\right]
\end{array}%
\right. $. Then $H_{\infty }$ is homeomorphic to $\left\{ 0\right\} \times %
\left[ -1,1\right] \cup \left\{ \left( x,\sin (1/x)\right) :x\in
(0,1]\right\} $, the topologist's sine curve, which is not locally connected. 

Now,
consider $s_{n}=\Delta _{H_{n}}\cup \left\{ \left( x,\frac{1}{2}\right)
:x\in \left[ \frac{1}{2},1\right] \right\} \cup \left\{ \left( \frac{1}{2}%
,~x\right) :x\in \left[ \frac{1}{2},1\right] \right\} $.

Then
$H^{\ast }$ and $G^{\ast }$ are
homeomorphic to an arc. 
Let $F\colon G\sp\ast\to H\sp\ast$ be the identity function. Then $f$ is a weak g-cell map and $\widehat{f}=F$, $F$ is continuous but since 
$f$ takes values in $\{0\}\times[-1,1]$ and in 
$\left\{ \left( x,\sin (1/x)\right) :x\in
(0,1]\right\}$ then
it  cannot be continuous. 

 \end{example}

% In the following example we show that  if $F$ is continuous $f$ may not be continuous, even if $\pi\sp\prime$ is open and closed.

% \begin{example}
% Let $\left\{ \left( G_{n},r_{n}\right) ,g_{n}^{n+1}\right\} _{n\in 
% %TCIMACRO{\U{2115} }%
% %BeginExpansion
% \mathbb{N}
% %EndExpansion
% }$  and   $\left\{ \left( H_{n},s_{n}\right) ,h_{n}^{n+1}\right\} _{n\in 
% %TCIMACRO{\U{2115} }%
% %BeginExpansion
% \mathbb{N}
% %EndExpansion
% }$ be the following g-cell structures: $G_n=[0,1]$, $g_n^{n+1}=Id$ y $r_n=\Delta_{G_n}$. Let $H_{\infty}$ be the arc of pseudo-arcs described by Proposition 3.1 in \cite{WL1978}. The space $H_{\infty}$ could be constructed as an inverse limit such that the continuous decomposition onto the arc $[0,1]$, $\pi':H_{\infty}\rightarrow[0,1] $, satisfies that $(\pi')^{-1} (t)$ is a pseudo-arc for each $t\in [0,1]$. It is known that $H_{\infty}$ is not locally connected.

% Since $G_{\infty}=[0,1]$ and $\pi :G_{\infty}\rightarrow[0,1] $ the trivial decomposition. Then $G^{\ast}=H^{\ast}=[0,1]$. We may consider the identity map $F:G^{\ast} \rightarrow H^{\ast}$.

% \bigskip 

% \[
% \begin{array}{ccc}
% G_{\infty } & \nrightarrow  & H_{\infty } \\ 
% \downarrow \pi  &  & \downarrow \pi ^{\prime } \\ 
% G^{\ast } & \overset{F}{\longrightarrow } & H^{\ast }%
% \end{array}%
% \]
% \bigskip 

%  So, in this example, $\pi '$ is an open and closed function but, since image of locally connected spaces must be a locally connected space, it is not possible to obtain a nondegenerate continuous function $f:G_{\infty} \rightarrow H_{\infty}$.

% \end{example}

\section{g-cell maps}

In this section, we introduce g-cell maps, a type of set-valued function defined between the unions of factor spaces associated with two given g-cell structures. 

In contrast to weak g-cell maps, which are defined on the inverse limits of g-cell structures, g-cell maps are set-valued functions between the unions of factor spaces. We show these g-cell maps induce weak g-cell maps. Furthermore, we establish the necessary conditions under which a g-cell map induces a continuous weak g-cell map.
We follow the idea of cell maps given in \cite{DT2018} to define these g-cell maps for g-cell structures.

\begin{definition}\label{def:gcellmap}
Let $\left\{\left(G_n,r_n\right),g_{n}^{n+1}\right\}_{n\in\N}$ and $\left\{\left(H_n,s_n\right),h_{n}^{n+1}\right\}_{n\in\N}$ be inverse sequences of cellular graphs.
A g-cell map is a set-valued function $f$
$:~\underset{i\in 
%TCIMACRO{\U{2115} }%
%BeginExpansion
\mathbb{N}
%EndExpansion
}{\bigcup }G_{i}\rightarrow \underset{i\in 
%TCIMACRO{\U{2115} }%
%BeginExpansion
\mathbb{N}
%EndExpansion
}{\bigcup }H_{i}$  which satisfies the following properties:

\begin{enumerate}
\item $h_{i}^{j}\left( f\left( x\right) \cap H_{j}\right) \subset f\left(
x\right) \cap H_{i}$ for $i\leq j$,

\item if $i\leq j$, $k\in 
%TCIMACRO{\U{2115} }%
%BeginExpansion
\mathbb{N}
%EndExpansion
$ and $x\in G_{j}$ with $f\left( g_{i}^{j}\left( x\right) \right) \cap
H_{k}\neq \emptyset $, then $f\left( x\right) \cap H_{k}\neq
\emptyset $ and $f\left( x\right) \cap H_{k}\subset f\left(
g_{i}^{j}\left( x\right) \right) $,

\item if $a\in f\left( x\right) \cap H_{k}$, $b\in f\left(
y\right) \cap H_{k}$, where $x,y\in G_{i}$ are such that 
$\left( x,y\right) \in r_{i}$, then $\left( a,b\right)
\in s_{k}$,

\item for each $\overline{x}\in G_{\infty }$ and $k\in\N$ 
%TCIMACRO{\U{2115} }%
%BeginExpansion
%\mathbb{N}
%EndExpansion
there exists $i\in 
%TCIMACRO{\U{2115} }%
%BeginExpansion
\mathbb{N}
%EndExpansion
$ such that $f\left( x_{i}\right) \cap H_{k}$ is a Hausdorff, compact, nonempty set.
\end{enumerate}

\end{definition}

The following lemma provides conditions for the function $f$ to preserve the closeness relationship between elements of the inverse sequence.

\begin{lemma}\label{closecells}
    Let $\left\{\left(G_n,r_n\right),g_{n}^{n+1}\right\}_{n\in\N}$ and $\left\{\left(H_n,s_n\right),h_{n}^{n+1}\right\}_{n\in\N}$ be inverse sequences of cellular graphs.
A g-cell map $f$
$:~\underset{i\in 
%TCIMACRO{\U{2115} }%
%BeginExpansion
\mathbb{N}
%EndExpansion
}{\bigcup }G_{i}\rightarrow \underset{i\in 
%TCIMACRO{\U{2115} }%
%BeginExpansion
\mathbb{N}
%EndExpansion
}{\bigcup }H_{i}$ satisfies the following: 

\begin{enumerate}
    \item[(1a)]  Let $\overline{x}$ and $\overline{y} \in G_{\infty}$  such that  $(x_{i},y_{i})\in r_{i}$ for some $i\in \N$. If $a\in f(x_i)$  and $b\in f(y_i)$, then $a$ and $b$ are close.
    \item[(1b)]  Let $\overline{x}$ and $\overline{y} \in G_\infty$  such that  $x_i$ is close to $y_j$. If $a\in f(x_i)$ and $b\in f(y_j)$, then $a$ and $b$ are close.
\end{enumerate}
\end{lemma}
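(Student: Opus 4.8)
The plan is to prove the two items in sequence, since (1b) will essentially contain (1a) as the special case $i=j$. For (1a), I would first recall the definition of \emph{close}: cells $a\in H_m$ and $b\in H_n$ are close if $(h_k^m(a),h_k^n(b))\in s_k$ where $k=\min\{m,n\}$. So given $a\in f(x_i)$ and $b\in f(y_i)$, say $a\in f(x_i)\cap H_m$ and $b\in f(y_i)\cap H_n$, set $k=\min\{m,n\}$. The goal is to produce elements of $f(x_i)\cap H_k$ and $f(y_i)\cap H_k$ that are images of $a$ and $b$ under the bonding maps, and then invoke axiom (3) of Definition~\ref{def:gcellmap}. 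Concretely, since $k\le m$, property (1) of Definition~\ref{def:gcellmap} gives $h_k^m(a)\in h_k^m(f(x_i)\cap H_m)\subset f(x_i)\cap H_k$; similarly $h_k^n(b)\in f(y_i)\cap H_k$. Now $x_i,y_i\in G_i$ with $(x_i,y_i)\in r_i$, $h_k^m(a)\in f(x_i)\cap H_k$ and $h_k^n(b)\in f(y_i)\cap H_k$, so property (3) yields $(h_k^m(a),h_k^n(b))\in s_k$, which is precisely the statement that $a$ and $b$ are close.

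For (1b), suppose $x_i$ is close to $y_j$; WLOG assume $i\le j$ (the case $j\le i$ being symmetric), so by definition $(g_i^j(x_i'),g_i^j(y_j'))$—wait, more precisely closeness of $x_i$ and $y_j$ with $\ell=\min\{i,j\}=i$ means $(g_i^i(x_i),g_i^j(y_j))=(x_i,g_i^j(y_j))\in r_i$. Set $y_j':=g_i^j(y_j)\in G_i$, so $(x_i,y_j')\in r_i$. The difficulty is that $b\in f(y_j)$, not $b\in f(y_j')$, so I cannot immediately apply (1a) to the pair $x_i,y_j'$. This is where property (2) of Definition~\ref{def:gcellmap} enters: say $b\in f(y_j)\cap H_k$ for some $k$, so $f(y_j)\cap H_k\ne\emptyset$. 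I want to conclude $f(g_i^j(y_j))\cap H_k\ne\emptyset$. If instead $f(g_i^j(y_j))\cap H_k=\emptyset$, then by the contrapositive of (2) we would need... actually (2) runs the other direction, so I would argue: it suffices to find \emph{some} level $k'$ at which both $f(x_i)$ and $f(y_j')$ meet and relate back to $a,b$. The cleanest route: pick $a\in f(x_i)\cap H_m$, $b\in f(y_j)\cap H_k$, let $\ell=\min\{m,k\}$; by (1), $h_\ell^k(b)\in f(y_j)\cap H_\ell$, so $f(y_j)\cap H_\ell\ne\emptyset$, and since $f(y_j)\cap H_\ell = f(y_j)\cap H_\ell$ with $g_i^j$ applied... hmm, the hypothesis of (2) is "$f(g_i^j(x))\cap H_k\ne\emptyset$" not "$f(x)\cap H_k\ne\emptyset$". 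So property (2) does not directly help push information from $f(y_j)$ up to $f(g_i^j(y_j))$.

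Given that asymmetry, I expect the \textbf{main obstacle} is exactly bridging from $f(y_j)$ to $f(g_i^j(y_j))$ so that axiom (3) becomes applicable at a common level in $G_i$. I anticipate the intended argument instead uses that for a thread $\overline{y}$ we also have $y_i=g_i^j(y_j)$, and one should apply (1a) directly to the pair $(x_i,y_i)\in r_i$ together with the elements $a\in f(x_i)$ and — using property (2) with the roles arranged so that $x:=y_j$, $g_i^j(x):=y_i$ — deduce that from $b\in f(y_j)\cap H_k$ with $f(y_i)\cap H_k=f(g_i^j(y_j))\cap H_k$; but (2) would require knowing $f(y_i)\cap H_k\ne\emptyset$ first. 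So the actual fix is: (1b) should be invoked only in contexts where $b\in f(y_i)$ is available, or one first applies property (1) to descend $b$ to a level $\le i$. I would therefore structure (1b) as: choose the level low enough (below $i$ and below $j$) using property (1) to bring both $a$ and $b$ down, observe that at that low level the relevant sets are images under $g_i^j$ by property (2) applied contrapositively, and then close with property (3) and the fact that $(x_i, g_i^j(y_j))\in r_i$. If property (2) genuinely cannot be made to run in the needed direction, the statement (1b) must implicitly assume $b\in f(y_j)$ with the understanding that $f(y_j)\cap H_k$ propagates to all $f(y_\ell)$ for $\ell\le j$ — which is precisely what the thread condition plus (1) and (2) together deliver, and I would spell that out as the crux lemma-within-the-proof.
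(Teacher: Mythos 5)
Your treatment of (1a) is correct and is essentially the paper's own argument: writing $a\in f(x_i)\cap H_m$ and $b\in f(y_i)\cap H_n$, you push the higher-level cell down to level $\min\{m,n\}$ using property (1) of Definition~\ref{def:gcellmap} and then apply property (3) at that common level, which is exactly what the definition of \emph{close} requires. The paper presents this as two symmetric cases ($k\ge j$ and $k<j$) rather than with a single $\min$, but the content is identical.

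For (1b), however, your proposal never arrives at a proof: it ends in a discussion of possible fixes rather than a completed argument, so as written it has a gap. That said, the obstacle you isolate is genuine, and it is precisely the step the paper's own proof elides. The paper's entire argument for (1b) is: assume $i\le j$, note that closeness of $x_i$ and $y_j$ plus the thread condition give $(x_i,g_i^j(y_j))=(x_i,y_i)\in r_i$, and then ``use (1a).'' But (1a) applied to the pair $(x_i,y_i)$ needs $b\in f(y_i)$, while the hypothesis only provides $b\in f(y_j)$. As you observe, property (2) transports information from the \emph{lower} index upward: from $f(g_i^j(y_j))\cap H_k=f(y_i)\cap H_k\ne\emptyset$ one deduces $f(y_j)\cap H_k\subset f(y_i)$, so to place $b$ inside $f(y_i)$ one must first know $f(y_i)\cap H_k\ne\emptyset$, which neither the lemma's hypotheses nor properties (1)--(4) supply unconditionally; and descending $b$ with property (1) keeps it inside $f(y_j)$, never moving it into $f(y_i)$. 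The clean repair is to record the auxiliary fact ``if $f(y_i)\cap H_k\ne\emptyset$ and $b\in f(y_j)\cap H_k$ with $i\le j$, then $b\in f(y_i)\cap H_k$'' as a consequence of (2), add that nonemptiness as a hypothesis (or verify it where the lemma is invoked, e.g.\ in Proposition~\ref{prop:gcellmap}), and then conclude by (1a). You should state this explicitly and finish the argument rather than leaving it as speculation; in its current form your (1b) is not a proof, even though your diagnosis of where the difficulty lies is more careful than the paper's.
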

\begin{proof}
   \begin{enumerate}
       \item[(1a)] 
   
   Assume $a\in f(x_i)\cap H_j$ for some $j\in N$. There are two cases. If $b\in f(y_i)\cap H_k$ and $k\geq j$. By (1), we have that $h_j^k(b)\in f(y_i)\cap H_j$. Hence by (3), $(a, h_j^k(b))\in s_j$. Similarly, if $b\in f(y_i)\cap H_k$ and $k<j$, then by (1) and (3), $(h_k^j(a),b)\in s_k$. 

    \item[(1b)] Without loss of generality, assume $i\leq j$. Then $(x_i,g_i^j(y_j))=(x_i,y_i)\in r_i$. Using (1a), we obtain that $a$ and $b$ are close, as wanted.
    \end{enumerate}
\end{proof}

In the following proposition, given maps between the respective factor spaces, a g-cellular function is naturally constructed.

\begin{proposition}

Let $\left\{ \left( G_{n},r_{n}\right) ,g_{n}^{n+1}\right\} _{n\in 
%TCIMACRO{\U{2115} }%
%BeginExpansion
\mathbb{N}
%EndExpansion
}$ and $\left\{ \left( H_{n},s_{n}\right) ,h_{n}^{n+1}\right\} _{n\in 
%TCIMACRO{\U{2115} }%
%BeginExpansion
\mathbb{N}
%EndExpansion
}$ be inverse sequences of cellular graphs. If for each $i\in\N$ there is a function $f_{i}:G_{i}\rightarrow H_{i}$ such that $f_{i}\circ
g_{i}^{i+1}=h_{i}^{i+1}\circ f_{i+1}$ and $(f_i(x),f_i(y))\in s_i$ whenever $(x,y)\in r_i$, then there exists a g-cell map
\begin{equation*}
f:~\underset{i\in 
%TCIMACRO{\U{2115} }%
%BeginExpansion
\mathbb{N}
%EndExpansion
}{\bigcup }G_{i}\rightarrow \underset{i\in 
%TCIMACRO{\U{2115} }%
%BeginExpansion
\mathbb{N}
%EndExpansion
}{\bigcup }H_{i}
\end{equation*}
given by,
for $x\in G_i$,
\begin{equation*}
f\left( x\right) =\bigcup_{j=0}^{i} h_{i-j}^{i}(f_{i}(x))= \bigcup_{m=1}^{i} p_m^\prime\left(\langle f_i(x)\rangle
\right).
\end{equation*}

%
%TCIMACRO{\U{b4}}%
%BeginExpansion{\acute{}}%
%EndExpansion
\end{proposition}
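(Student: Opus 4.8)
The task is to verify that the set-valued function $f$, defined on $x\in G_i$ by $f(x)=\bigcup_{j=0}^{i}h_{i-j}^{i}(f_i(x))$, satisfies the four axioms of Definition~\ref{def:gcellmap}. Before checking them I would first record the basic consistency fact that makes $f$ well defined and compatible across levels: iterating the relation $f_i\circ g_i^{i+1}=h_i^{i+1}\circ f_{i+1}$ gives $f_i\circ g_i^{j}=h_i^{j}\circ f_j$ for all $i\le j$, so that for $x\in G_j$ the element $f_i(g_i^j(x))$ equals $h_i^j(f_j(x))$. I would also note that $f(x)\cap H_k$, for $x\in G_i$ and $k\le i$, is exactly the singleton $\{h_k^i(f_i(x))\}$ (it is empty for $k>i$), since the defining union ranges over levels $i,i-1,\dots,0$ and each $H_m$ appears once; this description is what makes the subsequent verifications routine.

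\textbf{Axiom (1).} For $x\in G_i$ and $k\le \ell\le i$, one has $f(x)\cap H_\ell=\{h_\ell^i(f_i(x))\}$ and $f(x)\cap H_k=\{h_k^i(f_i(x))\}$; since $h_k^\ell\circ h_\ell^i=h_k^i$ by the cocycle identity for the bonding maps, $h_k^\ell\bigl(f(x)\cap H_\ell\bigr)=f(x)\cap H_k$, which is even stronger than the required inclusion.

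\textbf{Axiom (2).} Suppose $i\le j$, $x\in G_j$, and $f(g_i^j(x))\cap H_k\ne\emptyset$. Since $g_i^j(x)\in G_i$, this forces $k\le i\le j$, so $f(x)\cap H_k=\{h_k^j(f_j(x))\}\ne\emptyset$ as well. Moreover $f(x)\cap H_k=\{h_k^j(f_j(x))\}=\{h_k^i(h_i^j(f_j(x)))\}=\{h_k^i(f_i(g_i^j(x)))\}=f(g_i^j(x))\cap H_k\subset f(g_i^j(x))$, using the consistency fact from the first paragraph.

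\textbf{Axiom (3).} Let $a\in f(x)\cap H_k$, $b\in f(y)\cap H_k$ with $x,y\in G_i$ and $(x,y)\in r_i$; then $k\le i$, $a=h_k^i(f_i(x))$, $b=h_k^i(f_i(y))$. By hypothesis $(f_i(x),f_i(y))\in s_i$, and since the bonding maps $h_k^i$ send edges to edges, $(h_k^i(f_i(x)),h_k^i(f_i(y)))=(a,b)\in s_k$.

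\textbf{Axiom (4) — the main point.} Given $\overline{x}\in G_\infty$ and $k\in\N$, I would take any $i\ge k$; then for the thread coordinate $x_i\in G_i$ we have $f(x_i)\cap H_k=\{h_k^i(f_i(x_i))\}$, a nonempty singleton. A one-point subset of a topological space is automatically compact, and it is Hausdorff as a subspace provided one is willing to regard a single point as a Hausdorff space (which it trivially is). The only place where one must be slightly careful is the implicit requirement — visible in the second displayed form of $f$ via $\langle f_i(x)\rangle$ and the projections $p_m'$ — that these pieces fit together as the coordinates of a thread; I would spell out that $\langle f_i(x)\rangle$ denotes the thread in $H_\infty$ through $f_i(x)$ (well defined because $f_i(g_i^j(x))=h_i^j(f_j(x))$ makes the values compatible under the $h$'s), so that $p_m'(\langle f_i(x)\rangle)=h_m^i(f_i(x))$ for $m\le i$, and the two expressions for $f(x)$ agree. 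The genuine obstacle, such as it is, is bookkeeping: being consistent about the index range ($j$ from $0$ to $i$ versus $m$ from $1$ to $i$), about when $f(x)\cap H_k$ is empty versus a singleton, and about invoking "edges to edges" for $h$ exactly where axiom (3) needs it; no hard analysis is involved once the singleton description of $f(x)\cap H_k$ is in hand.
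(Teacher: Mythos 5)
Your proposal is correct and follows essentially the same route as the paper's proof: both reduce everything to the observation that $f(x)\cap H_k$ is the singleton $\{h_k^i(f_i(x))\}$ for $k\le i$ (empty otherwise) and then verify the four axioms using the iterated intertwining relation $f_i\circ g_i^j=h_i^j\circ f_j$ and the fact that the bonding maps send edges to edges. Your treatment of axioms (2)--(4) is in fact slightly more explicit than the paper's (which disposes of (3) with ``follows from hypothesis''), and your remark about the off-by-one index range in the displayed definition of $f$ is a fair catch, but none of this changes the argument.
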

\begin{proof}
(1) Let $n\leq m$. Assume $x\in G_i$. If $i<m$, then $f(x)\cap H_m=\emptyset$ and we are done. Else, if $m\leq i$, then $f(x)\cap H_m=\{h_m^i(f_i(x)) \}$. Hence 
$h_n^m(f(x)\cap H_m)=\{h_n^i(f_i(x))\}=f(x)\cap H_n$.

(2) Let $i\leq j$, $k\in\N$ and $x\in G_j$ such that $f(g_i^j(x))\cap H_k\neq \emptyset$. This implies that $k\leq j$ and $f(x)\cap H_k\neq\emptyset$, by the definition of $f$.  Finally $f(x)\cap H_k=\{h_k^j(f_j(x))\}=\{f_k(g_k^j(x))\}\subset f\left(g_i^j (x)\right)$.

(3)Follows from hypothesis.

(4) Let $\overline{x}\in G\sb\infty$. If $x_i\in G_i$, then $f_i
(x_i)\in H_i$ for each $i\in \N$. This implies  $f(x_i)\cap H_i=\{ f_i(x_i)\}$.

\end{proof}
Now, we show that a g-cell map induces a weak g-cell map. 
\begin{proposition}\label{prop:gcellmap}
Let $\left\{ \left( G_{n},r_{n}\right) ,g_{n}^{n+1}\right\} _{n\in 
%TCIMACRO{\U{2115} }%
%BeginExpansion
\mathbb{N}
%EndExpansion
}$ and $\left\{ \left( H_{n},s_{n}\right) ,h_{n}^{n+1}\right\} _{n\in 
%TCIMACRO{\U{2115} }%
%BeginExpansion
\mathbb{N}
%EndExpansion
}$ be g-cell structures. 
Let  $f:~\underset{i\in 
%TCIMACRO{\U{2115} }%
%BeginExpansion
\mathbb{N}
%EndExpansion
}{\bigcup }G_{i}\rightarrow \underset{i\in 
%TCIMACRO{\U{2115} }%
%BeginExpansion
\mathbb{N}
%EndExpansion
}{\bigcup }H_{i}$ be a g-cell map. Then there exists a weak g-cell map
$\overline{f}\colon G_{\infty
}\rightarrow H_{\infty }$ 
% \color{red}{ and $\left( p_{i}\left( f\left( x\right) \right)
% ,p_{i}\left( f\left( y\right) \right) \right) \in s_{i}$ if $\left(
% p_{i}\left( x\right) ,p_{i}\left( y\right) \right) \in r_{i}$. Then $f$
% induces a function $\widehat{f}:G^{\ast }\rightarrow H^{\ast }$ given by $%
% \widehat{f}\left[ x\right] =\left[ f\left( x\right) \right] .$}
\end{proposition}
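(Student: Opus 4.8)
The plan is to define, for each thread $\overline{x}=(x_n)_{n\in\N}\in G_\infty$, the image $\overline{f}(\overline{x})$ as a coherent selection from the sets $f(x_n)$, and then to read off the weak g-cell condition from property (3) of Definition~\ref{def:gcellmap}.

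First, fix $\overline{x}\in G_\infty$. For each $n\in\N$, property (4) provides an index $i_n$ such that $W_n:=f(x_{i_n})\cap H_n$ is a nonempty compact Hausdorff space. Applying property (2) with the cell $x_j\in G_j$ (so that $g_{i_n}^{j}(x_j)=x_{i_n}$), one gets, for every $j\ge i_n$, that $f(x_j)\cap H_n$ is nonempty and contained in $W_n$, and that this family is decreasing in $j$. Set
\[
D_n:=\bigcap_{j\ge i_n}\bigl(f(x_j)\cap H_n\bigr).
\]
The first substantive step is to show that each $D_n$ is a nonempty compact Hausdorff space: the sets $f(x_j)\cap H_n$ form a decreasing family of nonempty subsets of the compact Hausdorff space $W_n$, and the point is that they are eventually closed in $W_n$ (equivalently, compact), which one extracts from a further application of property (4), so that $D_n$ is nonempty and compact. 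Next, property (1) gives $h_l^{n}\bigl(f(x_j)\cap H_n\bigr)\subset f(x_j)\cap H_l$ for $l\le n$ and all $j$, and combining this with the nesting supplied by property (2) yields $h_l^{n}(D_n)\subset D_l$. Hence $\{D_n,\,h_l^{n}|_{D_n}\}$ is an inverse system of nonempty compact Hausdorff spaces indexed by $\N$, so its inverse limit is nonempty and sits inside $\underset{\leftarrow}{\lim}\{H_n,h_n^{n+1}\}=H_\infty$. Choose any $\overline{y}$ in this inverse limit and set $\overline{f}(\overline{x}):=\overline{y}$; note that $y_n\in D_n$, i.e.\ $y_n\in f(x_j)\cap H_n$ for every $j\ge i_n$.

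To verify that $\overline{f}$ is a weak g-cell map, suppose $(\overline{x},\overline{x}')\in r$, so that $(x_n,x_n')\in r_n$ for all $n$. Write $\overline{y}=\overline{f}(\overline{x})$ and $\overline{y}'=\overline{f}(\overline{x}')$, with associated indices $i_n$ and $i_n'$. Fix $n$ and pick $j\ge\max(i_n,i_n')$. Then $y_n\in f(x_j)\cap H_n$ and $y_n'\in f(x_j')\cap H_n$ while $(x_j,x_j')\in r_j$, so property (3) (with $k=n$) gives $(y_n,y_n')\in s_n$. Since $n$ was arbitrary, $(\overline{y},\overline{y}')\in s$, as required.

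I expect the main obstacle to be the nonemptiness of the nested intersections $D_n$: property (4) only guarantees a single index $i_n$ at which $f(x_{i_n})\cap H_n$ is compact, and a decreasing sequence of merely nonempty subsets of a compact space can have empty intersection, so one must argue that the tail sets $f(x_j)\cap H_n$ ($j\ge i_n$) are closed (hence compact) in $W_n$, and then check that the resulting sets $D_n$ still assemble into an inverse system compatibly with the bonding maps $h_l^{n}$ — note that the ambient compact spaces $W_n$ themselves need not be compatible under the $h_l^{n}$, so this last verification must be carried out at the level of the $D_n$ directly (using the nesting from (2) and the containment from (1)). Once that bookkeeping is settled, the remaining ingredients are standard: the inverse limit of an inverse system of nonempty compact Hausdorff spaces is nonempty, and the weak g-cell property is immediate from axiom (3).
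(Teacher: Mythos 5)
Your construction diverges from the paper's at a critical point, and the divergence introduces a gap you identify but do not close. You define $D_n=\bigcap_{j\ge i_n}\bigl(f(x_j)\cap H_n\bigr)$ and need it to be nonempty; as you yourself observe, this requires the tail sets $f(x_j)\cap H_n$ to be closed (hence compact) in $W_n$ for all $j\ge i_n$. You claim this can be ``extracted from a further application of property (4),'' but it cannot: property (4) asserts only that for each fixed $\overline{x}$ and each fixed $k$ there exists \emph{one} index $i$ at which $f(x_i)\cap H_k$ is compact Hausdorff and nonempty. It says nothing about the sets $f(x_j)\cap H_k$ for $j>i$, which (2) guarantees to be nonempty and nested but not closed. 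A decreasing sequence of nonempty, non-closed subsets of a compact Hausdorff space can have empty intersection, so the nonemptiness of $D_n$ is genuinely unproved, and the rest of your argument (in particular the choice of $y_n\in f(x_j)\cap H_n$ for \emph{every} $j\ge\max(i_n,i_n')$, which is what lets you invoke (3) at a common index) collapses with it.

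The paper avoids this entirely by never forming the intersection. It picks a strictly increasing sequence of indices $\alpha_i$ with $K_i=f(x_{\alpha_i})\cap H_i$ compact Hausdorff nonempty, checks via (1) and (2) that $h_i^{i+1}(K_{i+1})\subset K_i$, and takes $\overline{y}$ in the (nonempty) inverse limit of the system $\{K_i,h_i^{i+1}|_{K_{i+1}}\}$ --- each $K_i$ is compact by (4) directly, so no closedness of tail sets is needed. The price is that in the verification the indices $\alpha_i$ and $\alpha_i'$ attached to $\overline{x}$ and $\overline{x}'$ need not match, so one cannot apply axiom (3) directly at a common level; the paper instead uses Lemma~\ref{closecells}(1b), which handles cells at different levels via the ``close'' relation. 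If you adopt the paper's single-index construction, you should replace your final step by that lemma (or reprove its content); if you insist on the intersection $D_n$, you need an additional hypothesis forcing the sets $f(x_j)\cap H_n$ to be closed, which the definition of a g-cell map does not supply.
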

\begin{proof}
%  \textcolor{red}{Let $\overline{x}\in G\sb\infty{}$. Using $(4)$, there are $s_1,s_2\in \N$ such that $f(x_{s_1})\cap H_1\neq \emptyset\neq f(x_{s_2})\cap H_2$.}

% \bigskip

% \textcolor{red}{Using $(4)$, there are $s_1,s_2\in \N$ such that $f(x_{s_1})\cap H_1\neq \emptyset\neq f(x_{s_2})\cap H_2$.
%  If $s_2<s_1$, using $(2)$, since $f(g_{s_2}^{s_1}(x_{s_1}))\cap H_2\neq\emptyset$, then $\emptyset\neq f(x_{s_1})\cap H_2 \subset f(x_{s_2})$. Let $y_2\in f(x_{s_1})\cap f(x_{s_2})\cap H_2$
% and le $y_1=h_1^2(y_2)$. Now, if $s_1\leq s_2$,
%  using $(2)$, since $f(g_{s_1}^{s_2}(x_{s_2}))\cap H_1\neq \emptyset$, then $\emptyset\neq f(x_{s_2})\cap H_1\subset f(x_{s_1})$. Hence $ h_1^2(f(x_{s_2})\cap H_2)\subset f(x_{s_2})\cap H_1$. Let $y_1\in h_1^2\left(f(x_{s_2})\cap H_2\right)\cap f(x_{s_1})\cap H_1$.
%  Assuming we have $y_i$ and $f(x_{s_{i+1}})$
%  }

 Let $\overline{x}\in G_{\infty }$. By (4) in Definition~\ref{def:gcellmap}, for each $i\in \N$, there is $\alpha_{i}$
such that $K_i=f\left( x_{\alpha_{i}}\right) \cap H_{i}$ is Hausdorff, compact and nonempty. We may assume that $\alpha_i<\alpha_{i+1}$, since otherwise, if $\alpha_{i}\geq \alpha_{i+1}$, we may use (2) in Definition~\ref{def:gcellmap} to replace $\alpha_{i+1}$ by $\alpha_{i}+1$.
 Notice that by (1) and (2) in  Definition~\ref{def:gcellmap}, $h_{i}^{i+1}\left( f\left( x_{\alpha_{i+1}}\right) \cap H_{i+1}\right)
\subset f\left( x_{\alpha_{i+1}}\right) \cap H_{i}$ and $f\left( x_{\alpha_{i+1}}\right)
\cap H_{i}\subset f\left( g_{\alpha_{i}}^{\alpha_{i+1}}\left( x_{\alpha_{i+1}}\right) \right)
=f\left( x_{\alpha_{i}}\right)$, respectively. Then $h_{i}^{i+1}(K_{i+1})\subset K_i$ and so by Theorem~3.6 in \cite{ES}, $\displaystyle\lim_{\leftarrow}(K_i,h_{i}^{i+1}|_{K_{i+1}})\neq\emptyset$. We define $\overline{f}(\overline{x})=\overline{y}$ where $\overline{y}\in \displaystyle\lim_{\leftarrow}(K_i,h_{i}^{i+1}|_{K_{i+1}})\subset H\sb\infty$.
Let $\overline{x}$ and $\overline{x^\prime}$ in $G\sb\infty$ such that $(\overline{x},\overline{x^\prime})\in r$.  Suppose that $\overline{f}(\overline{x})=\overline{y}=(y_i)_{i=1}^{\infty}$ and $\overline{f}(\overline{x^\prime})=\overline{y^\prime}=(y_i^{\prime})_{i=1}^{\infty}\subset H\sb\infty$. For $i\in\N$, we have that $y_i\in f(x_{\alpha_i})\cap H_i$ and $y_{i}^\prime\in f(x_{\alpha_i^\prime}^{\prime})\cap H_i\subset f(x_{\alpha_i^\prime}^{\prime})$. Since $x_{\alpha_i}$ and $x_{\alpha_i^\prime}^{\prime}$ are close, an application of Lemma~\ref{closecells} yields that $y_i$ and $y_{i}^{\prime}$ are close, but since they belong to the same $H_i$, hence $(y_i,y_i^{\prime})\in s_i$. Thus $\overline{f}$ is a weak g-cell map.

\end{proof}

\begin{corollary}
Let $\left\{ \left( G_{n},r_{n}\right) ,g_{n}^{n+1}\right\} _{n\in 
%TCIMACRO{\U{2115} }%
%BeginExpansion
\mathbb{N}
%EndExpansion
}$ and $\left\{ \left( H_{n},s_{n}\right) ,h_{n}^{n+1}\right\} _{n\in 
%TCIMACRO{\U{2115} }%
%BeginExpansion
\mathbb{N}
%EndExpansion
}$ be g-cell structures. 
Let  $f:~\underset{i\in 
%TCIMACRO{\U{2115} }%
%BeginExpansion
\mathbb{N}
%EndExpansion
}{\bigcup }G_{i}\rightarrow \underset{i\in 
%TCIMACRO{\U{2115} }%
%BeginExpansion
\mathbb{N}
%EndExpansion
}{\bigcup }H_{i}$ be a g-cell map.  
%and let  $\overline{f}:G_{\infty }\rightarrow H_{\infty }$ be induced by $f$. 
Then $f$
induces a function $\widehat{f}:G^{\ast }\rightarrow H^{\ast }$ 
%given by $\widehat{f}\left[ x\right] =\left[ \overline{f}\left( x\right) \right].$
\end{corollary}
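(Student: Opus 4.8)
The plan is to obtain $\widehat{f}$ by composing the two constructions already established. First I would invoke Proposition~\ref{prop:gcellmap} to produce, from the g-cell map $f$, a weak g-cell map $\overline{f}\colon G_{\infty}\to H_{\infty}$. Then Theorem~\ref{theoweakgcell}, applied to $\overline{f}$, yields a function $\widehat{\overline{f}}\colon G^{\ast}\to H^{\ast}$ with $\widehat{\overline{f}}([\overline{x}])=[\overline{f}(\overline{x})]$, and I would simply define $\widehat{f}:=\widehat{\overline{f}}$. That already discharges the statement, since all the corollary asserts is the existence of such an induced function.

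The one point that merits care is that the weak g-cell map $\overline{f}$ furnished by Proposition~\ref{prop:gcellmap} is not canonical: the value $\overline{f}(\overline{x})$ depends on a choice of indices $\alpha_{i}$ and of a thread in $\lim_{\leftarrow}(K_{i},h_{i}^{i+1}|_{K_{i+1}})$. So I would add a short paragraph verifying that the class $[\overline{f}(\overline{x})]\in H^{\ast}$ is independent of these choices, so that $\widehat{f}$ is genuinely attached to $f$ and not to the auxiliary data. Concretely, suppose $\overline{y}$ and $\overline{y^{\prime}}$ are two threads arising from the same $\overline{x}$ through different choices. For each $i$ one has $y_{i}\in f(x_{\alpha_{i}})\cap H_{i}$ and $y_{i}^{\prime}\in f(x_{\alpha_{i}^{\prime}})\cap H_{i}$. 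Since $x_{\alpha_{i}}$ and $x_{\alpha_{i}^{\prime}}$ are coordinates of the same thread, they are close (the closeness condition there reduces to reflexivity of $r_{\min\{\alpha_{i},\alpha_{i}^{\prime}\}}$), so Lemma~\ref{closecells}(1b) gives that $y_{i}$ and $y_{i}^{\prime}$ are close; as they lie in the same $H_{i}$, this means $(y_{i},y_{i}^{\prime})\in s_{i}$. Since this holds for every $i$, we get $(\overline{y},\overline{y^{\prime}})\in s$, hence $[\overline{y}]=[\overline{y^{\prime}}]$ in $H^{\ast}$.

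I do not anticipate any real obstacle here: this is essentially a bookkeeping corollary of Proposition~\ref{prop:gcellmap} together with Theorem~\ref{theoweakgcell}, and the only mildly delicate step is the independence-of-choices check above, which is exactly where Lemma~\ref{closecells} does the work. If one prefers, the well-definedness discussion can be bypassed altogether by reading the statement as ``fix any weak g-cell map $\overline{f}$ induced by $f$ as in Proposition~\ref{prop:gcellmap}; then $\widehat{f}=\widehat{\overline{f}}$ works'', in which case the proof collapses to a single line.
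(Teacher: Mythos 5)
Your proof is correct and takes essentially the same route as the paper, whose entire argument is the two-step composition of Proposition~\ref{prop:gcellmap} with Theorem~\ref{theoweakgcell}. Your additional paragraph checking that $[\overline{f}(\overline{x})]$ is independent of the choices of $\alpha_i$ and of the thread in $\lim_{\leftarrow}(K_i,h_i^{i+1}|_{K_{i+1}})$ is a worthwhile refinement that the paper omits, but it does not alter the approach.
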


\begin{proof}
 By Proposition~\ref{prop:gcellmap} there is a function $\overline{f}$ induced by $f$.
 By Theorem~\ref{theoweakgcell}
 this induces $\widehat{f}$  given by $\widehat{f}\left(\left[ x\right]\right) =\left[ \overline{f}\left( x\right) \right]$.
\end{proof}

In the following example, we show that if the function $f$ in Proposition~\ref{prop:gcellmap} is continuous, then the induced map $\overline{f}$ is not necessarily continuous.

\begin{example}
Consider $\left\{ \left( G_{n},r_{n}\right) ,g_{n}^{n+1}\right\} _{n\in 
%TCIMACRO{\U{2115} }%
%BeginExpansion
\mathbb{N}
%EndExpansion
}$ and $\left\{ \left( H_{n},s_{n}\right) ,h_{n}^{n+1}\right\} _{n\in 
%TCIMACRO{\U{2115} }%
%BeginExpansion
\mathbb{N}
%EndExpansion
}$ two g-cell structures, where $H_n$ is compact, non-degenerated and Hausdorff. Let  $f:~\underset{i\in 
%TCIMACRO{\U{2115} }%
%BeginExpansion
\mathbb{N}
%EndExpansion
}{\bigcup }G_{i}\rightarrow \underset{i\in 
%TCIMACRO{\U{2115} }%
%BeginExpansion
\mathbb{N}
%EndExpansion
}{\bigcup }H_{i}$ be defined as
\begin{equation*}
f(g)=\bigcup_{k=1}^{i} H_k
\end{equation*}
if $g\in G_i$. Then it can be checked that $f$ is a g-cell map and that $f$ is both upper and lower semicontinuous. Hence $f$ is a continuous g-cell map. To construct the map $\overline{f}\colon G\sb\infty \to H\sb\infty$ of Proposition~\ref{prop:gcellmap}, given a $\overline{x}$ in $G\sb\infty$, one chooses a $\overline{y}$ in $H\sb\infty$, since $K_i=H_i$. Hence it is easy to construct a $\overline{f}$ which is not continuous.
\end{example}

In the following we give conditions for a $g$-cell map to induce a continuous function on the quotients.

\begin{theorem}
Let $\left\{ \left( G_{n},r_{n}\right) ,g_{n}^{n+1}\right\} _{n\in 
%TCIMACRO{\U{2115} }%
%BeginExpansion
\mathbb{N}
%EndExpansion
}$ and $\left\{ \left( H_{n},s_{n}\right) ,h_{n}^{n+1}\right\} _{n\in 
%TCIMACRO{\U{2115} }%
%BeginExpansion
\mathbb{N}
%EndExpansion
}$ be g-cell structures. 
Let  $f:~\underset{i\in 
%TCIMACRO{\U{2115} }%
%BeginExpansion
\mathbb{N}
%EndExpansion
}{\bigcup }G_{i}\rightarrow \underset{i\in 
%TCIMACRO{\U{2115} }%
%BeginExpansion
\mathbb{N}
%EndExpansion
}{\bigcup }H_{i}$ be an upper semi-continuous g-cell map. If for each $\overline{x}\in G\sb\infty$ and for each $i\in \N$ the set $f(x_i)\cap H_i$ is a singleton then
the weak g-cell map
$\overline{f}\colon G_{\infty
}\rightarrow H_{\infty }$, defined in Proposition~\ref{prop:gcellmap}, is continuous. Hence ${f}$ induces a continuous function $\widehat {\overline{f}}\colon G\sp\ast\to H\sp\ast$.
\end{theorem}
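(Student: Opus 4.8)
The plan is first to observe that the singleton hypothesis pins the map $\overline f$ of Proposition~\ref{prop:gcellmap} down completely and equips it with an explicit coordinatewise description, and then to read off continuity of $\overline f$ from upper semi-continuity of $f$ by enlarging an open set of $H_i$ to an open set of the disjoint sum $\bigcup_n H_n$. Continuity of $\widehat{\overline f}$ is then immediate from Theorem~\ref{theoweakgcell}.

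\emph{Step 1 (an explicit formula for $\overline f$).} I would first check that, under the hypothesis, one may take $\alpha_i=i$ for every $i$ in the construction of $\overline f$ inside the proof of Proposition~\ref{prop:gcellmap}: the set $f(x_i)\cap H_i$ is a singleton, hence Hausdorff, compact and nonempty, so (4) of Definition~\ref{def:gcellmap} holds with this choice. Writing $K_i=f(x_i)\cap H_i$, property (1) gives $h_i^{i+1}(K_{i+1})\subset f(x_{i+1})\cap H_i$, and property (2), applied with $j=i+1$, $k=i$, $x=x_{i+1}$ and using $g_i^{i+1}(x_{i+1})=x_i$ together with $f(x_i)\cap H_i\neq\emptyset$, gives $f(x_{i+1})\cap H_i\subset f(x_i)\cap H_i=K_i$. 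Hence $h_i^{i+1}(K_{i+1})\subset K_i$, and since each $K_i$ is a single point, $\lim_{\leftarrow}(K_i,h_i^{i+1}|_{K_{i+1}})$ consists of exactly one thread. Thus $\overline f(\overline x)$ is uniquely determined and
\[
p_i\bigl(\overline f(\overline x)\bigr)=\text{the unique point of } f(x_i)\cap H_i\qquad(i\in\N).
\]
The same use of property (2) shows that $K_i=f(x_i)\cap H_i$ for \emph{any} admissible choice of the $\alpha_i$, so this description does not depend on the choices made in Proposition~\ref{prop:gcellmap}, and ``the'' weak g-cell map $\overline f$ of the statement is unambiguous.

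\emph{Step 2 (continuity of $\overline f$).} Since $H_\infty$ carries the topology induced from the product, it suffices to show each $p_i\circ\overline f\colon G_\infty\to H_i$ is continuous. Fix $i$ and $\overline x\in G_\infty$, put $y_i=p_i(\overline f(\overline x))$, and let $W\subset H_i$ be open with $y_i\in W$. I would then set
\[
O=W\cup\bigcup_{n\neq i}H_n ,
\]
which is open in $\bigcup_n H_n$ with the disconnected sum topology and satisfies $O\cap H_i=W$. By Step~1, $f(x_i)\cap H_i=\{y_i\}\subset W$ and $f(x_i)\cap H_n\subset H_n$ for $n\neq i$, so $f(x_i)\subset O$. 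Upper semi-continuity of $f$ then makes $N=\{x\in G_i:f(x)\subset O\}$ open in $G_i$, and $x_i\in N$, so $U=p_i^{-1}(N)\cap G_\infty$ is an open neighbourhood of $\overline x$ in $G_\infty$. For $\overline{x'}\in U$ one has $f(x'_i)\subset O$, hence $f(x'_i)\cap H_i\subset O\cap H_i=W$, and by Step~1 the single point $p_i(\overline f(\overline{x'}))$ lies in $f(x'_i)\cap H_i\subset W$. Thus $(p_i\circ\overline f)(U)\subset W$, giving continuity of $p_i\circ\overline f$ at $\overline x$; as $i$ and $\overline x$ are arbitrary, $\overline f$ is continuous. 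Then $\overline f$ is a continuous weak g-cell map, so Theorem~\ref{theoweakgcell} yields the continuity of $\widehat{\overline f}\colon G^\ast\to H^\ast$.

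I expect the only genuine obstacle to be Step~1: one must make sure that the coordinates of the \emph{a priori} choice-dependent thread $\overline f(\overline x)$ are actually forced by $f$ restricted to the single cell $x_i$, and this is precisely where condition (2) of a g-cell map is needed. Once that is in place, Step~2 is short, since the passage from $W\subset H_i$ to $O=W\cup\bigcup_{n\neq i}H_n$ converts the hypothesis ``$f$ is upper semi-continuous'' into exactly the statement that the required preimage is open.
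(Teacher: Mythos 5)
Your proposal is correct and follows the same overall strategy as the paper: use the singleton hypothesis to identify $p_i(\overline f(\overline x))$ with the unique point of $f(x_i)\cap H_i$, then pull back an open set of $H_i$ through the upper semi-continuity of $f$ to get a basic open neighbourhood $p_i^{-1}(N)\cap G_\infty$ of $\overline x$ mapping into $\pi_i^{-1}(W)$. Your Step~1 is more detailed than the paper's (which simply asserts $K_i=f(x_i)\cap H_i$ without checking independence of the choices in Proposition~\ref{prop:gcellmap}), and that extra care is worthwhile.

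The one genuine divergence is in how upper semi-continuity is invoked, and here your version actually repairs a weak point of the paper's argument. The paper takes $U=\{x: f(x)\subset W_i\}$ with $W_i\subset H_i$ open and then claims $\overline x\in V$; but this requires $f(x_i)\subset W_i$, which generally fails because $f(x_i)$ meets levels $H_n$ with $n\neq i$ (indeed, in the canonical construction preceding Proposition~\ref{prop:gcellmap} one has $f(x)\cap H_m\neq\emptyset$ for all $m\leq i$). Your enlargement $O=W\cup\bigcup_{n\neq i}H_n$ fixes exactly this, since then $f(x_i)\subset O$ automatically. The price is that you apply upper semi-continuity to the open set $O$ of the disjoint sum $\bigcup_n H_n$, whereas the paper's stated definition of upper semi-continuity only quantifies over open subsets $W_i$ of a single $H_i$; these are not formally equivalent (the set $\{x: f(x)\subset O\}$ is an intersection over $n$ of conditions of the form $f(x)\cap H_n\subset O\cap H_n$, which is not the same as $f(x)\subset O\cap H_n$). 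You should either adopt the standard definition of upper semi-continuity for set-valued maps (arbitrary open subsets of the codomain), under which your step is immediate, or add a short argument bridging the two formulations. With that caveat made explicit, your proof is complete and, in my view, more robust than the one in the paper.
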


\begin{proof}

Let $\overline{x}=\left( x_{i}\right) _{i\in \mathbb{N}} \in G_{\infty}$, then by the defintion of $\overline{f}$, we have $%
\overline{f}\left( \overline{x}\right) =\overline{y}=\left( y_{i}\right)
_{i\in \mathbb{N}}$, with $\left\{ \overline{y}\right\} =\underleftarrow{%
\lim }\left\{ K_{i},h_{i}^{i+1}|_{K_{i+1}}\right\} $, where $K_i=f(x_i)\cap H_i$.

Let $W$ be a basic open subset that contains $\overline{y}$. Then $%
W=\pi _{i}^{-1}\left( W_{i}\right) $ for some $i\in \mathbb{N}$ and $W_i$ an open set of $G_i$.
Since $f$ is upper semi-continuous, the set $\left\{ x\in {\bigcup }G_{j} :f\left( x\right) \subset W_i \right\} =U $ is open.

%By hypotesis $f\left( x_{i}\right) =\left\{ y_{n}\right\} _{n=1}^{i}$

Let $\overline{z}\in V=\left[ \left( U\cap G_{i}\right) \times
\prod\limits_{j\neq i}G_{j}\right] \cap G_{\infty }$. Then $\overline{f}%
\left( \overline{z}\right) \in W$. As $V$ is an open set in $G\sb\infty$ containing $\overline{x}$ and $V\subset \left(\overline{f}\right)^{-1}(W)$, we conclude that $\overline{f}$ is continuous. Using Theorem~\ref{theoweakgcell}, we obtain that $\widehat{\overline{f}}$ exists and is continuous.

\end{proof}

In the following we show that given a continuous function between the quotient spaces and under certain conditions, there exists a g-cell map such that its induced map coincides with the original function. 

\begin{theorem}\label{thm:last}
  Let $\left\{ \left( G_{n},r_{n}\right) ,g_{n}^{n+1}\right\} _{n\in 
%TCIMACRO{\U{2115} }%
%BeginExpansion
\mathbb{N}
%EndExpansion
}$ and $\left\{ \left( H_{n},s_{n}\right) ,h_{n}^{n+1}\right\} _{n\in 
%TCIMACRO{\U{2115} }%
%BeginExpansion
\mathbb{N}
%EndExpansion
}$ be g-cell structures. Let $F\colon G\sp\ast\to H\sp\ast$ be a continuous map such that, for each $\overline{x}\in G\sb\infty$ and $k\in \mathbb{N}$ there exists $i\in \mathbb{N}$ such that  $(\pi^{\prime})^{-1}\left(F\left(\pi \left(\langle x_i\rangle\right)\right)\right)
\cap H_k$ is a nonempty, compact and Hausdorff. Assume that $H_1$ is a simplex, that is, any two elements are related. Then there exists a g-cell map $f:~\underset{i\in 
%TCIMACRO{\U{2115} }%
%BeginExpansion
\mathbb{N}
%EndExpansion
}{\bigcup }G_{i}\rightarrow \underset{i\in 
%TCIMACRO{\U{2115} }%
%BeginExpansion
\mathbb{N}
%EndExpansion
}{\bigcup }H_{i}$ such that $\widehat{f}=F$. 
\end{theorem}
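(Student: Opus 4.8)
The plan is to run the chain $f\mapsto\overline f\mapsto\widehat{\overline f}$ of Proposition~\ref{prop:gcellmap}, the Corollary and Theorem~\ref{theoweakgcell} in reverse, building an explicit set-valued $f$ out of $F$ and the quotient maps $\pi,\pi'$. For $x\in G_i$ write $\langle x\rangle=p_i^{-1}(x)\subseteq G_\infty$ for the set of threads through $x$, and define
\[
f(x)\;=\;\bigcup_{k=1}^{i}\Bigl((\pi')^{-1}\bigl(F(\pi(\langle x\rangle))\bigr)\cap H_k\Bigr),
\]
where $(\pi')^{-1}(\,\cdot\,)\cap H_k$ is read as $p_k'\bigl((\pi')^{-1}(\,\cdot\,)\bigr)$ and $f(x)=\emptyset$ when $\langle x\rangle=\emptyset$; thus $f(x)\cap H_k=p_k'\bigl((\pi')^{-1}(F(\pi(\langle x\rangle)))\bigr)$ for $k\le i$ and $f(x)\cap H_k=\emptyset$ otherwise.

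First I would check that $f$ is a g-cell map. Axiom (1) is the identity $h_i^j\circ p_j'=p_i'$ on threads of $H_\infty$, which yields $h_i^j(f(x)\cap H_j)=f(x)\cap H_i$. Axiom (2) rests on the inclusion $\langle x\rangle\subseteq\langle g_i^j(x)\rangle$ for $x\in G_j$, $i\le j$ (a thread through $x$ passes through $g_i^j(x)$), which gives $F(\pi(\langle x\rangle))\subseteq F(\pi(\langle g_i^j(x)\rangle))$ and hence $f(x)\cap H_k\subseteq f(g_i^j(x))$, together with the propagation of nonemptiness along that inclusion. Axiom (4) is exactly the hypothesis: for $\overline x\in G_\infty$ and $k\in\N$ it supplies $i\ge k$ for which $f(x_i)\cap H_k=(\pi')^{-1}(F(\pi(\langle x_i\rangle)))\cap H_k$ is nonempty, compact and Hausdorff.

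The main obstacle is axiom (3): if $(x,y)\in r_i$ with $x,y\in G_i$ and $a\in f(x)\cap H_k$, $b\in f(y)\cap H_k$, then $(a,b)\in s_k$. For $k=1$ this is automatic because $H_1$ is a simplex — this is the role of that hypothesis, and it leaves $f$ free at the first level. For $2\le k\le i$ the fibred recipe above is a priori too coarse, since $a$ and $b$ may arise from threads lying in different $G_\infty$-classes; I expect the definition of $f(x)\cap H_k$ at the higher levels has to be sharpened to a ``germ'' of $F$ at $x$, replacing $F(\pi(\langle x\rangle))$ by the intersection of the $F$-images of the basic neighbourhoods $A_{x,U}$ of \cite{leonel}, $U$ running over neighbourhoods of $B(x,r_i)$. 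Continuity of $F$ is then used to control these $F$-images so that $r_i$-relatedness of $x$ and $y$ produces $F$-images agreeing modulo the equivalence relation, whence two representatives of one class of $H^\ast$ have $s$-related coordinates and $(a,b)\in s_k$ follows. Keeping this refinement compatible with the nonemptiness and compactness demanded by axiom (4) — which is exactly where the compact–Hausdorff hypothesis on $(\pi')^{-1}(F(\pi(\langle x_i\rangle)))\cap H_k$ enters — is the delicate point, and I would expect it to be the technical heart of the proof.

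It remains to see $\widehat f=F$. Given $\overline x\in G_\infty$, take $\overline f(\overline x)=\overline y\in\lim_{\leftarrow}\{K_i,h_i^{i+1}|_{K_{i+1}}\}$ as in Proposition~\ref{prop:gcellmap}, with $K_i=f(x_{\alpha_i})\cap H_i$ and $\alpha_i\uparrow\infty$. For each $i$, $y_i\in K_i$ produces a thread $\overline w^{(i)}\in H_\infty$ with $w^{(i)}_i=y_i$, hence agreeing with $\overline y$ on the first $i$ coordinates, and with $[\overline w^{(i)}]=F([\overline u^{(i)}])$ for some thread $\overline u^{(i)}$ agreeing with $\overline x$ up to level $\alpha_i$. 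As $i\to\infty$ one has $\overline u^{(i)}\to\overline x$ in $G_\infty$ and $\overline w^{(i)}\to\overline y$ in $H_\infty$; continuity of $\pi$, then of $F$, then of $\pi'$, shows that $[\overline w^{(i)}]$ converges both to $F([\overline x])$ and to $[\overline y]$, while continuity of $F$ (via the open sets $A_{x_{\alpha_i},U}$) contracts the classes $F(\pi(\langle x_{\alpha_i}\rangle))$ onto $\{F([\overline x])\}$; hence $[\overline y]=F([\overline x])$. Therefore $\widehat f([\overline x])=[\overline f(\overline x)]=[\overline y]=F([\overline x])$, and since $\overline f$ is a weak g-cell map (Proposition~\ref{prop:gcellmap}), Theorem~\ref{theoweakgcell} shows the induced map $\widehat f=\widehat{\overline f}$ equals $F$.
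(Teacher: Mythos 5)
Your overall strategy --- define $f$ level by level from $(\pi')^{-1}(F(\pi(\langle x\rangle)))$ and then feed it through Proposition~\ref{prop:gcellmap} and Theorem~\ref{theoweakgcell} --- is the same as the paper's, and your treatment of axioms (1), (2) and (4) essentially matches. But there is a genuine gap exactly where you flag one: axiom (3). You correctly observe that the definition $f(x)\cap H_k=p_k'\bigl((\pi')^{-1}(F(\pi(\langle x\rangle)))\bigr)$ is too coarse, since for $(x,y)\in r_i$ the points $a\in f(x)\cap H_k$ and $b\in f(y)\cap H_k$ may project from unrelated classes of $H^\ast$, and nothing forces $(a,b)\in s_k$. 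You then only gesture at a repair (shrinking to ``germs'' of $F$ over the neighbourhoods $A_{x,U}$) without carrying it out, and that is not the mechanism that makes the theorem work. The paper's fix is different: it keeps the value $p_j'\bigl((\pi')^{-1}(F(\pi\langle x\rangle))\bigr)$ at level $j$, but admits level $j$ into $f(x)$ only when the larger set $p_j'\bigl((\pi')^{-1}(F(\pi(\langle B(x,r_i)\rangle)))\bigr)$ is nonempty and a simplex. Then for $(x,y)\in r_i$ one has $\langle x\rangle\subset\langle B(y,r_i)\rangle$ and $\langle y\rangle\subset\langle B(y,r_i)\rangle$, so $a$ and $b$ both lie in the gating set for $y$ at level $k$, which is a simplex precisely because $f(y)\cap H_k\neq\emptyset$; hence $(a,b)\in s_k$. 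This is where the simplex hypothesis earns its keep beyond level $1$, and the gating also makes (1) and (2) slightly more delicate (one must check the simplex condition propagates under the bonding maps, which the paper does using that $h_k^j$ of a simplex is a simplex).

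A second problem: your verification of $\widehat f=F$ ends by saying that $[\overline w^{(i)}]$ converges both to $F([\overline x])$ and to $[\overline y]$, hence these are equal. Limits in $H^\ast$ need not be unique --- $H^\ast$ is a quotient of an inverse limit of not-necessarily-Hausdorff cellular graphs and no separation axiom is assumed --- so this step does not close. (For what it is worth, the paper's own proof only verifies that $f$ is a g-cell map and never addresses $\widehat f=F$ at all, so that identity is genuinely unproved on both sides; but your convergence argument as written is not a valid substitute.)
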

\begin{proof}
For $x\in G_i$, define $f(x)$ as follows;
\[
\bigcup_{j=1}^{\infty}\left\{  
h_j\left((\pi^{\prime})^{-1}\left(F\left(\pi\langle x\rangle\right)\right)\right)
\colon 
h_j\left((\pi^{\prime})^{-1}\left(F\left(\pi \left(\langle B(x,r_i)\rangle\right)\right)\right)\right)\neq\emptyset
\text{ is a simplex}\right\}
\]
We have
$f(x)\cap H_j=h_j\left((\pi^{\prime})^{-1}\left(F\left(\pi\langle x\rangle\right)\right)\right)$ by the definition of $f(x)$.
To prove (1), let $k\leq j$. Since $h_k\left((\pi^{\prime})^{-1}\left(F\left(\pi \left(\langle B(x,r_i)\rangle\right)\right)\right)\right)=h^j_k\left(h_j\left((\pi^{\prime})^{-1}\left(F\left(\pi \left(\langle B(x,r_i)\rangle\right)\right)\right)\right)\right)$ is a simplex when  $h_j\left((\pi^{\prime})^{-1}\left(F\left(\pi \left(\langle B(x,r_i)\rangle\right)\right)\right)\right)$ is a simplex and $h_k\left((\pi^{\prime})^{-1}\left(F\left(\pi \left(\langle x\rangle\right)\right)\right)\right)=h^j_k\left(h_j\left((\pi^{\prime})^{-1}\left(F\left(\pi \left(\langle x\rangle\right)\right)\right)\right)\right)$, then $h^j_k(f(x)\cap H_j)= f(x)\cap H_k$.

To prove (2), assume $l\leq i$, $j\in \N$ and $x\in G_i$
such that $f\left(
g_{l}^{i}\left( x\right) \right) \cap H_j\neq \emptyset$, then 
$h_j\left((\pi^{\prime})^{-1}\left(F\left(\pi \left(\langle B(g^i_l(x),r_l)\rangle\right)\right)\right)\right)\neq \emptyset$ is a simplex. Let  $a,b \in h_j\left((\pi^{\prime})^{-1}\left(F\left(\pi \left(\langle B(x,r_i)\rangle\right)\right)\right)\right)$, since $\langle B(x,r_i)\rangle\subset \langle B(g^i_l(x),r_l)\rangle$, $a,b \in h_j\left((\pi^{\prime})^{-1}\left(F\left(\pi \left(\langle B(g^i_l(x),r_l)\rangle\right)\right)\right)\right)$, then $(a,b)\in s_j$ and $h_j\left((\pi^{\prime})^{-1}\left(F\left(\pi \left(\langle B(x,r_i)\rangle\right)\right)\right)\right)$ is a simplex. Hence $f(x)\cap H_j\neq \emptyset$. Furthermore since 
$\langle x\rangle\subset \langle g^i_l(x)\rangle$, it follows 
 $f\left( x\right) \cap H_{j}\subset f\left(
g_{l}^{i}\left( x\right) \right). $
%and $x\in G_{j}$ with $f\left( g_{i}^{j}\left( x\right) \right) \capH_{k}\neq \emptyset $ then $f\left( x\right) \cap H_{k}\neq\emptyset $ and $f\left( x\right) \cap H_{k}\subset f\left(g_{i}^{j}\left( x\right) \right) $,  

For (3), let $a\in f\left( x\right) \cap H_{k}$, $b\in f\left(
y\right) \cap H_{k}$, where $x,y\in G_{i}$
 such that 
$\left( x,y\right) \in r_{i}$. We show that $\left( a,b\right)
\in s_{k}$. Since $a\in h_k\left((\pi^{\prime})^{-1}\left(F\left(\pi\langle x\rangle\right)\right)\right)\subset h_k\left((\pi^{\prime})^{-1}\left(F\left(\pi \left(\langle B(y,r_i)\rangle\right)\right)\right)\right)$ and $b\in 
h_k\left((\pi^{\prime})^{-1}\left(F\left(\pi\langle y\rangle\right)\right)\right)\subset h_k\left((\pi^{\prime})^{-1}\left(F\left(\pi \left(\langle B(y,r_i)\rangle\right)\right)\right)\right)$. Therefore, both $a$ and $b$ are contained in the same simplex, so $(a,b)\in s_k$.

Finally, (4) follows by the hypothesis.

\end{proof}

We conclude this paper with a question: Is the induced function $f$ in Theorem~\ref{thm:last} continuous?

\bibliographystyle{plain}
\bibliography{mybib}{}

\end{document}